\DeclareMathAlphabet{\pazocal}{OMS}{zplm}{m}{n}
\numberwithin{equation}{section}
\newtheorem{theorem}{Theorem}[section]
\newtheorem{lemma}[theorem]{Lemma}
\newtheorem{algorithm}[theorem]{Algorithm}
\numberwithin{equation}{section}
\theoremstyle{definition}
\newtheoremstyle{myremarkstyle}{}{}{}{}{\bfseries}{.}{ }{}
\theoremstyle{myremarkstyle}
\declaretheorem[name=Remark,qed=$\blacksquare$,numberlike=theorem]{remark}
\newcommand*{\intavg}{%
  \mint@l{-}{}%
}
\newcommand*{\mint@l}[2]{%
  \@ifnextchar\limits{%
    \mint@l{#1}%
  }{%
    \@ifnextchar\nolimits{%
      \mint@l{#1}%
    }{%
      \@ifnextchar\displaylimits{%
        \mint@l{#1}%
      }{%
        \mint@s{#2}{#1}%
      }%
    }%
  }%
}
\newcommand*{\mint@s}[2]{%
  \@ifnextchar_{%
    \mint@sub{#1}{#2}%
  }{%
    \@ifnextchar^{%
      \mint@sup{#1}{#2}%
    }{%
      \mint@{#1}{#2}{}{}%
    }%
  }%
}
\def\mint@sub#1#2_#3{%
  \@ifnextchar^{%
    \mint@sub@sup{#1}{#2}{#3}%
  }{%
    \mint@{#1}{#2}{#3}{}%
  }%
}
\def\mint@sup#1#2^#3{%
  \@ifnextchar_{%
    \mint@sub@sup{#1}{#2}{#3}%
  }{%
    \mint@{#1}{#2}{}{#3}%
  }%
}
\def\mint@sub@sup#1#2#3^#4{%
  \mint@{#1}{#2}{#3}{#4}%
}
\def\mint@sup@sub#1#2#3_#4{%
  \mint@{#1}{#2}{#4}{#3}%
}
\newcommand*{\mint@}[4]{%
  \mathop{}%
  \mkern-\thinmuskip
  \mathchoice{%
    \mint@@{#1}{#2}{#3}{#4}%
        \displaystyle\textstyle\scriptstyle
  }{%
    \mint@@{#1}{#2}{#3}{#4}%
        \textstyle\scriptstyle\scriptstyle
  }{%
    \mint@@{#1}{#2}{#3}{#4}%
        \scriptstyle\scriptscriptstyle\scriptscriptstyle
  }{%
    \mint@@{#1}{#2}{#3}{#4}%
        \scriptscriptstyle\scriptscriptstyle\scriptscriptstyle
  }%
  \mkern-\thinmuskip
  \int#1%
  \ifx\\#3\\\else_{#3}\fi
  \ifx\\#4\\\else^{#4}\fi  
}
\newcommand*{\mint@@}[7]{%
  \begingroup
    \sbox0{$#5\int\m@th$}%
    \sbox2{$#5\int_{}\m@th$}%
    \dimen2=\wd0 %
    \let\mint@limits=#1\relax
    \ifx\mint@limits\relax
      \sbox4{$#5\int_{\kern1sp}^{\kern1sp}\m@th$}%
      \ifdim\wd4>\wd2 %
        \let\mint@limits=\nolimits
      \else
        \let\mint@limits=\limits
      \fi
    \fi
    \ifx\mint@limits\displaylimits
      \ifx#5\displaystyle
        \let\mint@limits=\limits
      \fi
    \fi
    \ifx\mint@limits\limits
      \sbox0{$#7#3\m@th$}%
      \sbox2{$#7#4\m@th$}%
      \ifdim\wd0>\dimen2 %
        \dimen2=\wd0 %
      \fi
      \ifdim\wd2>\dimen2 %
        \dimen2=\wd2 %
      \fi
    \fi
    \rlap{%
      $#5%
        \vcenter{%
          \hbox to\dimen2{%
            \hss
            $#6{#2}\m@th$%
            \hss
          }%
        }%
      $%
    }%
  \endgroup
}
\def\XXint#1#2#3{{\setbox0=\hbox{$#1{#2#3}{\int}$ }
		\vcenter{\hbox{$#2#3$ }}\kern-.6\wd0}}
\DeclareMathOperator*{\argmin}{argmin}
\renewcommand{\geq}{\geqslant}
\renewcommand{\leq}{\leqslant}
\renewcommand{\epsilon}{\varepsilon}
\renewcommand{\phi}{\varphi}
\newcommand{\R}{\mathbb{R}}
\newcommand{\N}{\mathbb{N}}
\newcommand{\U}{{\bf U}}		
\newcommand{\vel}{\mathbf{u}}
\newcommand{\F}{{\bf F}}
\newcommand{\map}{\EuScript{L}}
\newcommand{\train}{\EuScript{S}}
\newcommand{\reg}{\EuScript{R}}
\newcommand{\er}{\EuScript{E}}
\newcommand{\cost}{\EuScript{C}}
\newcommand{\gl}{\EuScript{G}}
\newcommand{\rdl}{\EuScript{Y}}
\newcommand{\Ord}{{\mathcal O}}
\begin{document}

\date{\today}

\title{Iterative Surrogate Model Optimization (ISMO): \\
An active learning algorithm for PDE constrained optimization with deep neural networks.}

\author{Kjetil O. Lye \thanks{Mathematics and Cybernetics, SINTEF\newline Forskningsveien 1, 0373 Oslo, Norway}, Siddhartha Mishra \thanks{Seminar for Applied Mathematics (SAM), D-Math \newline
  ETH Z\"urich, R\"amistrasse 101, 
  Z\"urich-8092, Switzerland}, Deep Ray \thanks{Department of Aerospace and Mechanical Engineering, University of Southern California, Los Angeles, U.S.A} and Praveen Chandrasekhar \thanks{TIFR Centre for Applicable Mathematics, Bangalore-560065, India.}}

\date{\today}

\maketitle
\begin{abstract}
We present a novel \emph{active learning} algorithm, termed as \emph{iterative surrogate model optimization} (ISMO), for robust and efficient numerical approximation of PDE constrained optimization problems. This algorithm is based on deep neural networks and its key feature is the iterative selection of training data through a feedback loop between deep neural networks and any underlying standard optimization algorithm. Under suitable hypotheses, we show that the resulting optimizers converge exponentially fast (and with exponentially decaying variance), with respect to increasing number of training samples. Numerical examples for optimal control, parameter identification and shape optimization problems for PDEs are provided to validate the proposed theory and to illustrate that ISMO significantly outperforms a standard deep neural network based surrogate optimization algorithm.
\end{abstract}

\section{Introduction}
A large number of problems of interest in engineering reduce to the following \emph{optimization problem}: 
\begin{equation}
    \label{eq:opt1}
    {\rm Find}\quad \bar{y} = {\rm arg}\min\limits_{y \in Y} G\left(\map(y)\right),
\end{equation}
Here, $Y \subset \R^d$, the underlying map (\emph{observable}) $\map: Y \to Z$, with either $Z \subset \R^m$ or is a subset of an infinite-dimensional Banach space and $G: Z \to \R$ is the \emph{cost} or \emph{objective function}. We can think of $Y$
 as a design or control space, with any $y \in Y$ being a vector of design or control parameters, that steers the system to minimize the objective function. 
 
Many engineering systems of interest are modeled by partial differential equations (PDEs). Thus, evaluating the desired observable $\map$ and consequently, the objective function in \eqref{eq:opt1}, requires (approximate) solutions of the underlying PDE. The resulting problem is termed as a \emph{PDE constrained optimization} problem, \cite{BS1,TRL1} and references therein. 

A representative example for PDE constrained optimization is provided by the problem of designing the wing (or airfoils) of an aircraft in order to minimize drag~\cite{Reuther1995},~\cite{Mohammadi2009}. Here, the design variables are the parameters that describe the wing shape. The underlying PDEs are the compressible Euler or Navier-Stokes equations of gas dynamics and the objective function is the drag coefficient (while keeping the lift coefficient within a range) or the lift to drag ratio. Several other examples of PDE constrained optimization problems arise in fluid dynamics, solid mechanics and the geosciences \cite{BS1}. 

A wide variety of numerical methods have been devised to solve optimization problems in general, and PDE constrained optimization problems in particular \cite{BS1,TRL1}. A large class of methods are iterative and require evaluation of the gradients of the objective function in \eqref{eq:opt1}, with respect to the design (control) parameters $y$. These include first-order methods based on \emph{gradient descent} and second-order methods such as quasi-newton  and truncated Newton methods \cite{lbfgs}. Evaluating gradients in the context of PDE constrained optimization often involves the (numerical) solution of \emph{adjoints} (duals) of the underlying PDE~\cite{BS1}. Other types of optimization methods such as particle swarm optimization \cite{PSO} and genetic algorithms~\cite{GA} are \emph{gradient free} and are particularly suited for optimization, constrained by PDEs with rough solutions. 

Despite the well documented success of some of the aforementioned algorithms in the context of PDE constrained optimization, the solution of realistic optimization problems involving a large number of design parameters ($d \gg 1$ in \eqref{eq:opt1}) and with the underlying PDEs set in complex geometries in several space dimensions or describing multiscale, multiphysics phenomena, remains a considerable challenge. This is mostly on account of the fact that applying the above methods entails evaluating the objective function and hence the underlying map $\map$ and its gradients a large number (in the range of  $10^2-10^4$) of times. As a single call to the underlying PDE solver can be very expensive, computing the underlying PDEs (and their adjoints) multiple times could be prohibitively expensive.  


One possible framework for reducing the computational cost of PDE constrained optimization problems is to use \emph{surrogates}~\cite{Forrester2008}. This approach consists of generating \emph{training data}, i.e. computing $\map(y)$, $\forall y \in \train$, with $\train \subset Y$ denoting a \emph{training set}. Then, a \emph{surrogate model} is constructed by designing a surrogate map, $\hat{\map}: Y \to Z$ such that $\hat{\map}(y) \approx \map(y)$, for all $y \in \train$. Finally, one runs a standard optimization algorithm such as gradient descent or its variants, while evaluating the functions and gradients in \eqref{eq:opt1}, with respect to the surrogate map $\hat{\map}$. This surrogate model will be effective as long as $\map \approx \hat{\map}$ in a suitable sense, for all $y \in Y$ and the cost of evaluating the surrogate map $\hat{\map}$ is significantly lower than the cost of evaluating the underlying map $\map$. Examples of such surrogate models include reduced order models \cite{ROMbook} and Gaussian process regression \cite{GPRbook}. 

A particularly attractive class of such surrogate models are \emph{deep neural networks} (DNNs) \cite{DLbook}, i.e. functions formed by multiple compositions of affine transformations and scalar non-linear activation functions. Deep neural networks have been extremely successful at diverse tasks in science and engineering \cite{DLnat} such as at image and text classification, computer vision, text and speech recognition, autonomous systems and robotics, game intelligence and even protein folding \cite{Dfold}. 

Deep neural networks are also being increasingly used in different contexts in scientific computing. A very incomplete list of the rapidly growing literature includes the use of deep neural networks to approximate solutions of PDEs by so-called physics informed neural networks (PINNs) \cite{Lag1,KAR1,KAR2,KAR4,MM1,MM2} and references therein, solutions of high-dimensional PDEs, particularly in finance \cite{E1,HEJ1,Jent1} and references therein, improving the efficiency of existing numerical methods for PDEs, for instance in \cite{INC,DR1,SM1} and references therein.


A different approach is taken in recent papers \cite{LMR1,LMM1,MR1}, where the authors presented supervised deep learning algorithms to efficiently approximate \emph{observables} (quantities of interest) for solutions of PDEs and applied them to speedup existing (Quasi)-Monte Carlo algorithms for forward Uncertainty quantification (UQ). These papers demonstrated that deep neural networks can result in effective surrogates for observables in the context of PDEs. Given the preceding discussion, it is natural to take a step further and ask if the resulting surrogates can be used for PDE constrained optimization. This indeed constitutes the central premise of the current paper. 

Our first aim in this article is to present an algorithm for PDE constrained optimization that is based on combining standard (gradient based) optimization algorithms and deep neural network surrogates for the underlying maps in \eqref{eq:opt1}. The resulting algorithm, which we term \emph{DNNopt}, is described and examined, both theoretically (with suitable hypotheses on the underlying problem) and in numerical experiments. We find that although \emph{DNNopt} can be a viable algorithm for PDE constrained optimization and converges to a (local) minimum of the underlying optimization problem \eqref{eq:opt1}, it is not robust enough and can lead to a high \emph{variance} or sensitivity of the resulting (approximate) minima, with respect to starting values for the underlying optimization algorithm. A careful analysis reveals that a key cause for this high variance (large sensitivity) is the fact that the training set for the deep neural network surrogate is fixed \emph{a priori}, based on global approximation requirements. This training set may not necessarily represent the subset (in parameter space) of minimizers of the objective function in \eqref{eq:opt1} well, if at all. The resulting high variance can impede the efficiency of the algorithm and needs to be remedied.

To this end, the second and \emph{main aim} of this paper is to propose a novel \emph{active learning} procedure to \emph{iteratively} augment training sets for training a sequence of deep neural networks, each providing successively better approximation of the underlying minimizers of \eqref{eq:opt1}. The additions to the training sets in turn, are based on local minimizers of \eqref{eq:opt1}, identified by running standard optimization algorithms on the neural network surrogate at the previous step of iteration. This feedback between training neural networks for the observable $\map$ in \eqref{eq:opt1} and adding local optimizers as training points leads to the proposed algorithm, that we term as \emph{iterative surrogate model optimization (ISMO)}. Thus, ISMO can be thought of as an example of an \emph{active learning algorithm} \cite{AL}, where the learner (deep neural network) queries the teacher (standard optimization algorithm) to iteratively identify training data that provides a better approximation of local optima. We analyze ISMO theoretically  to find that the proposed algorithm is able to approximate minimizers of \eqref{eq:opt1}, with remarkably low variance or sensitivity with respect to starting values for the optimization algorithm. In particular, at least in restricted settings, ISMO converges exponentially fast to the underlying minima, with the variance also decaying exponentially fast. This behavior is observed in several numerical experiments. Thus, ISMO is shown to be a very effective framework for PDE constrained optimization. 

The rest of this paper is organized as follows: in section \ref{sec:2}, we put together preliminary material for this article. The DNNopt  and ISMO   algorithms for PDE constrained optimization are presented in sections \ref{sec:3} and \ref{sec:4}, respectively and numerical experiments are reported in section \ref{sec:5}. 
\section{Preliminaries}
\label{sec:2}
\subsection{The underlying constrained optimization problem}
The basis of our constrained optimization problem is the following time-dependent parametric PDE,
\begin{equation}
\label{eq:ppde}
\begin{aligned}
\partial_t \U(t,x,y) &= L\left(y, \U, \nabla_x \U, \nabla^2_x \U, \ldots \right), \quad \forall~(t,x,y) \in [0,T] \times D(y) \times Y, \\
\U(0,x,y) &= \overline{\U}(x,y), \quad \forall~ (x,y) \in D(y) \times Y, \\
L_{b} \U(t,x,y) &= \U_b (t,x,y), \quad \forall ~(t,x,y) \in [0,T] \times \partial D(y) \times Y.
\end{aligned}
\end{equation}
Here, $Y \subset \R^d$ is the underlying parameter space that describes the design (control) variables $y \in Y$. For simplicity of notation and exposition, we set $Y = [0,1]^d$ for the rest of this paper. The spatial domain is labeled as $y \rightarrow D(y) \subset \R^{d_s}$ and  $\U: [0,T] \times D \times Y  \rightarrow \R^{n}$ is the vector of unknowns. The differential operator $L$ is in a very generic form and can depend on the gradient and Hessian of $\U$, and possibly higher-order spatial derivatives. For instance, the heat equation as well as the Euler or Navier-Stokes equations of fluid dynamics are specific examples. Moreover, $L_b$ is a generic operator for imposing boundary conditions.

For the parametrized PDE \eqref{eq:ppde}, we define a generic form of the \emph{parameters to observable} map:
\begin{equation}
\label{eq:ptoob}
\map:Y\to \R^m, \; Y \ni y \mapsto \map(y) = \map(y,\U) \in \R^m.
\end{equation}



Finally, we define the cost (objective or goal) function $G$ as $G \in C^2(\R^m;\R)$. A very relevant example for such a cost function is given by,
\begin{equation}
    \label{eq:cost1}
    G(\map(y)):= |\map(y) - \bar{\map}|^p,
\end{equation}
for some target $\bar{\map} \in \R^m$, with $1 \leq p < \infty$ and $|.|$ denoting a vector norm. 

The resulting constrained optimization problem is given by
\begin{equation}
    \label{eq:opt}
    {\rm Find}\quad \bar{y} = {\rm arg}\min\limits_{y \in Y} G\left(\map(y)\right),
\end{equation}
with the observable defined by \eqref{eq:ptoob} and $C^2$-objective function, for instance the one defined in \eqref{eq:cost1}. 

We also denote, 
\begin{equation}
    \label{eq:cost2}
    \gl(y) = G(\map(y)), \quad \forall y \in Y,
\end{equation}
for notational convenience. 
\subsection{Standard optimization algorithms}
For definiteness, we restrict ourselves to the class of \emph{(quasi)-Newton} algorithms as our standard optimization algorithm in this paper. We assume that the cost function $\gl$ \eqref{eq:cost2} is such that $\gl \in C^2(Y)$ and we are interested in computing (approximate, local) minimizers of the optimization problem \eqref{eq:opt}. A generic quasi-Newton algorithm has the following form,
\begin{algorithm} 
\label{alg:qn} {\bf Quasi-Newton approximation of \eqref{eq:opt}} 
\begin{itemize}
\item [{\bf Inputs}:] Underlying cost function $\gl$ \eqref{eq:cost2}, starting value $y_0 \in Y$ and starting Hessian $B_0 \in \R^{d \times d}$, tolerance parameter $\epsilon$ 
\item [{\bf Goal}:] Find (local) minimizer for the optimization problem \eqref{eq:opt}. 
\item [{\bf At Step $k$}:] Given $y_k$, $B_k$
\begin{itemize}
    \item Find search direction $p_k \in \R^d$ by solving 
    \begin{equation}
        \label{eq:qn1}
        B_k p_k = -\nabla_y \gl(y_k)
    \end{equation}
    \item Perform a \emph{line search} (one-dimensional optimization):
    \begin{equation}
        \label{eq:qn2}
        \beta_k = {\rm arg}\min\limits_{\beta \in [0,1]} \gl(y_k + \beta p_k)
    \end{equation}
    \item Set $y_{k+1} = y_k + \beta_k p_k$
    \item Update the approximate Hessian,
    \begin{equation}
        \label{eq:qn3}
        B_{k+1} = {\mathcal F}\left(B_k, y_k, y_{k+1},\nabla_y \gl(y_k),\nabla_y \gl(y_{k+1}) \right)
    \end{equation}
\end{itemize}
\item If $|y_{k+1} - y_k| < \epsilon$, terminate the iteration, else continue.
\end{itemize}
\end{algorithm}
Different formulas for the update of the Hessian \eqref{eq:qn3} lead to different versions of the quasi-Newton algorithm \eqref{alg:qn}. For instance, the widely used BFGS algorithm \cite{lbfgs} uses an update formula \eqref{eq:qn3} such that the inversion step \eqref{eq:qn1} can be directly computed with the well-known Sherman-Morrison formula, from a recurrence relation. Other versions of quasi-Newton algorithms, in general truncated Newton algorithms, use iterative linear solvers for computing the solution to \eqref{eq:qn1}. Note that quasi-Newton methods do not explicitly require the Hessian of the objective function in \eqref{eq:opt}, but compute (estimate) it from the gradients in \eqref{eq:qn3}. Thus, only information on the gradient of the cost $\gl$ \eqref{eq:cost2} is required. 

We observe that implementing the algorithm \ref{alg:qn} in the specific context of the optimization problem \eqref{eq:opt}, constrained by the PDE \eqref{eq:ppde}, requires evaluating the map $\map$ and its gradients multiple times during each iteration. Thus, having a large number of iterations in algorithm \ref{alg:qn} entails a very high computational cost on account of the large number of calls to the underlying PDE solver. 
\subsection{Neural network surrogates}
As mentioned in the introduction, we will employ neural network based surrogates for the underlying observable $\map$ \eqref{eq:ptoob} in order to reduce the computational cost of employing standard optimization algorithm \ref{alg:qn} for approximating the solutions of the optimization problem \eqref{eq:opt}. To this end, we need the following ingredients.
\subsubsection{Training set}
As is customary in supervised learning (\cite{DLbook} and references therein), we need to generate or obtain data to train the network. To this end, we fix $N \in \N$ and select a set of points $\EuScript{S} = \{y_i\}_{1 \leq i \leq N}$, with each $y_i \in Y$. It is standard in machine learning that the points in the training set $\EuScript{S}$ are chosen randomly from the parameter space $Y$, independently and identically distributed with the Lebesgue measure. However, we can follow recent papers \cite{LMR1,MR1} to choose \emph{low discrepancy sequences} \cite{CAF1,owen} such as Sobol or Halton sequences as training points in order to obtain better rates of convergence of the resulting generalization error. For $Y$ in low dimensions, one can even choose (composite) Gauss quadrature points as viable training sets \cite{MM1}. 

\begin{figure}[htbp]
\centering
\includegraphics[width=8cm]{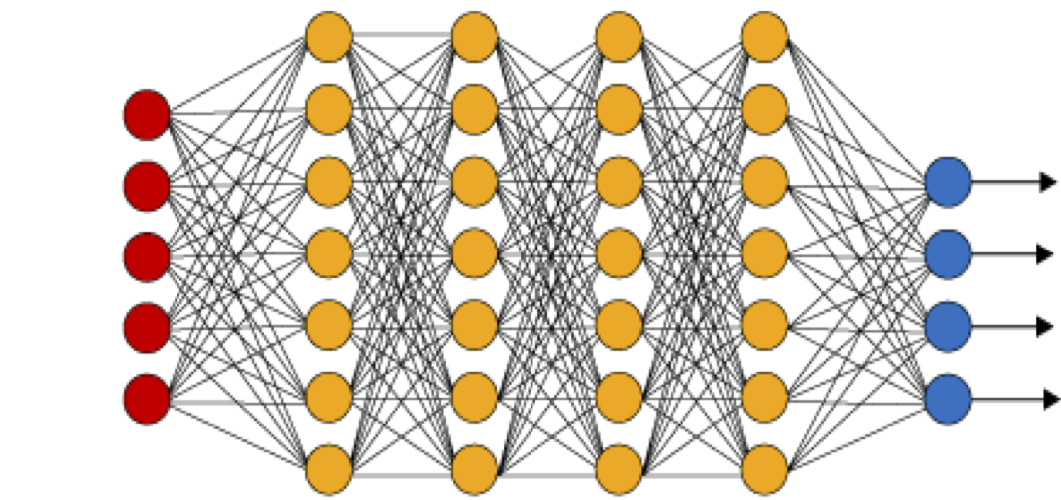}
\caption{An illustration of a (fully connected) deep neural network. The red neurons represent the inputs to the network and the blue neurons denote the output layer. They are
connected by hidden layers with yellow neurons. Each hidden unit (neuron) is connected by affine linear maps between units in different layers and then with nonlinear (scalar) activation functions within units.}
\label{fig:1}
\end{figure}

\subsubsection{Deep neural networks} 
\label{sec:NN}
Given an input vector $y \in Y$, a feedforward neural network (also termed as a multi-layer perceptron), shown in figure \ref{fig:1}, transforms it to an output through layers of units (neurons) consisting of either affine-linear maps between units (in successive layers) or scalar non-linear activation functions within units \cite{DLbook}, resulting in the representation,
\begin{equation}
\label{eq:ann1}
\map_{\theta}(y) = C_K \circ\sigma \circ C_{K-1}\ldots \ldots \ldots \circ\sigma \circ C_2 \circ \sigma \circ C_1(y).
\end{equation} 
Here, $\circ$ refers to the composition of functions and $\sigma$ is a scalar (non-linear) activation function. A large variety of activation functions have been considered in the machine learning literature \cite{DLbook}. Popular choices for the activation function $\sigma$ in \eqref{eq:ann1} include the sigmoid function, the $\tanh$ function and the \emph{ReLU} function defined by,
\begin{equation}
\label{eq:relu}
\sigma(z) = \max(z,0).
\end{equation}
When $z \in \R^p$ for some $p > 1$, then the output of the ReLU function in \eqref{eq:relu} is evaluated componentwise. 

For any $1 \leq k \leq K$, we define
\begin{equation}
\label{eq:C}
C_k (z_k) = W_k z_k + b_k, \quad {\rm for} ~ W_k \in \R^{d_{k+1} \times d_k}, z_k \in \R^{d_k}, b_k \in \R^{d_{k+1}}.
\end{equation}
For consistency of notation, we set $d_1 = d$ and $d_{K+1} = 1$. 

Thus in the terminology of machine learning (see also figure \ref{fig:1}), our neural network \eqref{eq:ann1} consists of an input layer, an output layer and $(K-1)$ hidden layers for some $1 < K \in \N$. The $k$-th hidden layer (with $d_{k+1}$ neurons) is given an input vector $z_k \in \R^{d_k}$ and transforms it first by an affine linear map $C_k$ \eqref{eq:C} and then by a ReLU (or another) nonlinear (component wise) activation $\sigma$ \eqref{eq:relu}. A straightforward addition shows that our network contains $\left(d + 1 + \sum\limits_{k=2}^{K} d_k\right)$ neurons. 
We also denote, 
\begin{equation}
\label{eq:theta}
\theta = \{W_k, b_k\}, \theta_W = \{ W_k \}\quad \forall~ 1 \leq k \leq K,
\end{equation} 
to be the concatenated set of (tunable) weights for our network. It is straightforward to check that $\theta \in \Theta \subset \R^M$ with
\begin{equation}
\label{eq:ns}
M = \sum\limits_{k=1}^{K} (d_k +1) d_{k+1}.
\end{equation}
\subsection{Loss functions and optimization} 
For any $y \in \train$, one can readily compute the output of the neural network $\map_{\theta} (y)$ for any weight vector $\theta \in \Theta$. We define the so-called training \emph{loss function} as 
\begin{equation}
\label{eq:lf1}
J (\theta) : = \sum\limits_{y \in \train} |\map(y) - \map_{\theta} (y) |^p,
\end{equation}
for some $1 \leq p < \infty$.   

The goal of the training process in machine learning is to find the weight vector $\theta \in \Theta$, for which the loss function \eqref{eq:lf1} is minimized. 

It is common in machine learning \cite{DLbook} to regularize the minimization problem for the loss function, i.e. we seek to find
\begin{equation}
\label{eq:lf2}
\theta^{\ast} = {\rm arg}\min\limits_{\theta \in \Theta} \left(J(\theta) + \lambda \reg(\theta) \right).
\end{equation}  
Here, $\reg:\Theta \to \R$ is a \emph{regularization} (penalization) term. A popular choice is to set  $\reg(\theta) = \|\theta_W\|^q_q$ for either $q=1$ (to induce sparsity) or $q=2$, with $\theta_W$ defined in \eqref{eq:theta}. The parameter $0 \leq \lambda \ll 1$ balances the regularization term with the actual loss $J$ \eqref{eq:lf1}. 

The above minimization problem amounts to finding a minimum of a possibly non-convex function over a subset of $\R^M$ for possibly very large $M$. We follow standard practice in machine learning by either (approximately) solving \eqref{eq:lf2} with a full-batch gradient descent algorithm or variants of mini-batch stochastic gradient descent (SGD) algorithms such as ADAM \cite{adam}. 

For notational simplicity, we denote the (approximate, local) minimum weight vector in \eqref{eq:lf2} as $\theta^{\ast}$ and the underlying deep neural network $\map^{\ast}= \map_{\theta^{\ast}}$ will be our neural network surrogate for the underlying map $\map$. 

The proposed algorithm for computing this neural network is summarized below.
\begin{algorithm} 
\label{alg:DL} {\bf Deep learning of parameters to observable map} 
\begin{itemize}
\item [{\bf Inputs}:] Underlying map $\map$ \eqref{eq:ptoob}. 
\item [{\bf Goal}:] Find neural network $\map_{\theta^{\ast}}$ for approximating the underlying map $\map$. 
\item [{\bf Step $1$}:] Choose the training set $\train = \{y_n\}$ for $y_n \in Y$, for all $1 \leq n \leq N$ such that the sequence $\{y_n\}$ is either randomly chosen or a low-discrepancy sequence or any other set of quadrature points in $Y$ . Evaluate $\map(y)$ for all $y \in \train$ by a suitable numerical method. 
\item [{\bf Step $2$}:] For an initial value of the weight vector $\overline{\theta} \in \Theta$, evaluate the neural network $\map_{\overline{\theta}}$ \eqref{eq:ann1}, the loss function \eqref{eq:lf2} and its gradients to initialize the
(stochastic) gradient descent algorithm.
\item [{\bf Step $3$}:] Run a stochastic gradient descent algorithm till an approximate local minimum $\theta^{\ast}$ of \eqref{eq:lf2} is reached. The map $\map^{\ast} = \map_{\theta^{\ast}}$ is the desired neural network approximating the map $\map$.
\end{itemize}
\end{algorithm}
\section{Constrained optimization with DNN surrogates}
\label{sec:3}
Next, we combine the deep neural network surrogate $\map^{\ast}$, generated with the deep learning algorithm \ref{alg:DL}, with the standard optimization algorithm \ref{alg:qn} in order to obtain the following algorithm,
\begin{algorithm}
\label{alg:dnnopt}
{\bf DNNopt: A deep learning based algorithm for PDE constrained optimization.}
\begin{itemize}
    \item [{\bf Inputs}] Parametrized PDE \eqref{eq:ppde}, observable \eqref{eq:ptoob}, cost function \eqref{eq:cost2}, training set $\train \subset Y$ (either randomly chosen or suitable quadrature points such as low discrepancy sequences), standard optimization algorithm \ref{alg:qn}. 
    \item [{\bf Goal}] Compute (approximate) minimizers for the PDE constrained optimization problem \eqref{eq:opt}. 
    \item [{\bf Step $1$}] Given training set $\train = \{ y_i \}$, with $y_i \in Y$ for $1\leq i \leq N$,  generate the deep neural network surrogate map $\map^{\ast}$ by running the deep learning algorithm \ref{alg:DL}. Set $\gl^{\ast}(y) = G(\map^{\ast}(y))$ for all $y \in Y$, with the cost function $G$ defined in \eqref{eq:cost2}.

    \item [{\bf Step $2$}] Draw $N$ random starting points $\tilde{y}_1, \ldots, \tilde{y}_N$ in $Y$. For each $1 \leq i \leq N$, run the standard optimization algorithm \ref{alg:qn} with cost function $\gl^{\ast}$ and starting values $\tilde{y}_i$ till tolerance to obtain a set $\bar{y}_i$ of approximate minimizers to the optimization problem \ref{eq:opt}.
\end{itemize}
\end{algorithm}
The following remarks about algorithm \ref{alg:dnnopt} are in order,
\begin{remark}
The quasi-Newton optimization algorithm \ref{alg:qn} requires derivatives of the cost function with respect to the input parameters $y$. This boils down to evaluation of $\nabla_y \map^{\ast}$. As $\map^{\ast}$ is a neural network of form \eqref{eq:ann1}, these derivatives can be readily and very cheaply computed with backpropagation. 
\end{remark}
\begin{remark}
\label{rem:cost}
The overall cost of DNNopt algorithm \ref{alg:dnnopt} is expected to be dominated by the cost of generating the training data $\map(y_i)$, for $y_i \in \train$, as this involves calls to the underlying and possibly expensive PDE solver. In practice, for observables $\map$, the cost of training the network can be much smaller than the cost of generating the training data, whereas the cost of evaluating the map $\map^{\ast}$ and its gradient $\nabla_y \map^{\ast}$ is negligible  (see Table 5 of the recent paper \cite{LMR1} for a comparison of these costs for a realistic example in aerodynamics). Thus, the cost of running the optimization  algorithm \ref{alg:qn} for the neural network surrogate is negligible, even for a large number of iterations.  
\end{remark}
\subsection{Analysis of the DNNopt algorithm \ref{alg:dnnopt}}
\label{sec:an1}
As discussed in remark~\ref{rem:cost}, the cost of DNNopt algorithm \ref{alg:dnnopt} is dominated by the cost of generating the training data. How many training samples $N$ are needed in order to achieve a desired accuracy for the optimization procedure ? This question is very hard to answer in full generality. However, we will investigate it in a special case, by imposing suitable hypotheses on the underlying map $\map$, on the cost function $\gl$, on the training procedure for the neural network and on the trained neural network. 

For the rest of this paper, we set $|.| = |.|_{\infty}$ which is the max vector norm. We start with suitable hypothesis on the underlying map $\map$ in \eqref{eq:opt}.
\begin{equation}
    \label{eq:h1}
    (H1) \quad \map:Y \to Z \subset \R, \quad \map\in W^{2,\infty}(Y), \quad \cost_{\map}:= \|\map\|_{W^{2,\infty}(Y)}.
\end{equation}
Next, we have the following hypothesis on the function $G$ that appears in the cost function \eqref{eq:cost2},
\begin{equation}
    \label{eq:h2}
    (H2) \quad G:Z\to X \subset \R,\quad G \in W^{2,\infty}(Z), \quad \cost_{G}:= \|G\|_{W^{2,\infty}(Z)}.
\end{equation}
Recalling that the cost function in \eqref{eq:cost2} is given by $\gl = G(\map)$, the hypotheses $H1$ and $H2$ imply that $\gl \in W^{2,\infty}(Y;X)$ with $\cost_{\gl} = \|\gl\|_{W^{2,\infty}(Y)}$ such that $\cost_{\gl} \leq \cost_{G} \cost_{\map}$. However, we need further hypothesis on this cost function given by,
\begin{equation}
    \label{eq:h34}
    \begin{aligned}
    &(H3) \quad \gl~{\rm has~exactly~one~global~minimum~at}~\bar{y}~{\rm and~no~other~local~minima}, \\
    &(H4)~\nabla \gl:Y \to \rdl \subset \R^d~{\rm is~invertible~with~inverse}~\nabla \gl^{-1}: \rdl \to Y~{\rm and}~\nabla \gl^{-1}\in W^{1,\infty}(\rdl),~\cost_{-1}:= \|\nabla \gl^{-1}\|_{W^{1,\infty}}
    \end{aligned}
\end{equation}
Note that the assumptions $H3$ and $H4$ are automatically satisfied by strictly convex functions. Thus, these hypotheses can be considered as a slight generalization of strict convexity. 

We are interested in analyzing the DNNopt algorithm \ref{alg:dnnopt}, with the underlying map and cost function satisfying the above hypothesis. Our aim is to derive estimates on the distance between the global minimizer $\bar{y}$ of the cost function $\gl$ (see hypothesis $H3$) and the approximate minimizers $\bar{y}_i$, for $1 \leq i \leq N$, generated by the DNNopt algorithm \ref{alg:dnnopt}. However, the standard training procedure for neural networks (see algorithm \ref{alg:DL}) consists of minimizing loss functions in $L^p$-norms and this may not suffice in controlling pointwise errors near extremas of the underlying function. Thus, we need to train the neural network to minimize stronger norms that can control the derivative. 

To this end, we fix $N \in \N$ and choose the training set $\train = \{y_i\}$ for $1 \leq i \leq N$ such that each $y_i \in Y$ and the training points satisfty,
\begin{equation}
    \label{eq:h5}
    (H5)\quad Y \subset \cup_{i=1}^N B\left(y_i,N^{-\frac{1}{d}}\right),
\end{equation}
with $B\left(y,r\right)$ denoting the $|.|_{\infty}$ ball, centered at $y$, with diameter $r$. 

The simplest way to enforce hypothesis $H5$ is to divide the domain $Y$ into $N$ cubes, centered at $y_i$, with diameter $N^{-\frac{1}{d}}$. 

On this training set, we assume that for any parameter $\theta \in \Theta$, the neural network $\map_{\theta} \in W^{2,\infty}(Y)$ by requiring that the activation function $\sigma \in W^{2,\infty}(\R)$ and consider the following (\emph{Lipschitz}) loss function:
\begin{equation}
    \label{eq:llf}
    \er_{N,T}^{Lip}(\theta) = \max\limits_{1 \leq i \leq N} \left|\map(y_i)-\map_{\theta}(y_i)\right| + \left|\nabla\map(y_i)-\nabla\map_{\theta}(y_i)\right|
\end{equation}
Then, a stochastic gradient descent algorithm is run in order to find 
\begin{equation}
    \label{eq:llf1}
    \theta^{\ast} = \argmin\limits_{\theta \in \Theta}  \er_{N,T}^{Lip}(\theta).
\end{equation}
Note that the gradients of the Lipschitz loss function \eqref{eq:llf} can be computed by backpropagation. We denote the trained neural network as $\map^{\ast} = \map_{\theta^{\ast}}$ and set 
\begin{equation}
    \label{eq:llf2}
    \er_{N,T}^{Lip,\ast} = \er_{N,T}^{Lip}(\theta^{\ast})
\end{equation}
Finally, we need the following assumption on the standard optimization algorithm \ref{alg:qn} i.e.,  for all starting values $y_i \in \train$, the optimization algorithm \ref{alg:qn} converges (with a finite number of iterations) to a local minimum $\bar{y}_i \in Y$ of the cost function $\gl^{\ast} = \gl(\map^{\ast})$ such that 
\begin{equation}
    \label{eq:h6}
    (H6)\quad \nabla \gl^{\ast}\left(\bar{y}_i\right) \equiv 0. 
\end{equation}
Note that we are not assuming that $\gl^{\ast}$ is convex, hence, only convergence to a local minimum can be realized. 

We have the following bound on the minimizers, generated by the DNNopt algorithm \ref{alg:dnnopt} in this framework, 
\begin{lemma}
\label{lem:1}
Let the underlying map $\map$ in \eqref{eq:opt} satisfy $H1$, the function $G$ satisfy $H2$, the cost function $\gl$ satisfy $H3$ and $H4$. Let $\map^{\ast}$ be a neural network surrogate for $\map$, generated by algorithm \ref{alg:DL}, with training set $\train = \{y_i\}$, for $1 \leq i \leq N$, such that the training points satisfy $H5$ and the neural network is obtained by minimizing the Lipschitz loss function \eqref{eq:llf}. Furthermore, for starting values $y_i \in \train$ as inputs to the standard optimization algorithm \ref{alg:qn}, let $\bar{y_i}$ denote the local minimizers of $\gl^{\ast} = G(\map^{\ast})$, generated by the algorithm \ref{alg:qn} and satisfy $H6$, then we have the following bound,
\begin{equation}
    \label{eq:dnbd}
    \begin{aligned}
    |\bar{y} - \bar{y}_j| &\leq (1+\cost_{\map})\cost_{-1}\cost_{G}\left(\er_{N,T}^{Lip,\ast} + \left(\cost_{\map} + \cost_{\map^{\ast}} \right) N^{-\frac{1}{d}}\right),
    \end{aligned}
\end{equation}
with constants $\cost_{\map,-1,G}$ are defined in \eqref{eq:h1}-\eqref{eq:h34}, $\cost_{\map^{\ast}} = \|\map^{\ast}\|_{W^{2,\infty}}$ and $\er_{N,T}^{Lip,\ast}$ is defined through \eqref{eq:llf}, \eqref{eq:llf2}. 
\end{lemma}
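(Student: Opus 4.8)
The plan is to reduce the bound on $|\bar y - \bar y_j|$ to a sup-norm estimate on $\nabla\gl - \nabla\gl^{\ast}$, and then to propagate the network's training error through the chain rule and the covering hypothesis $H5$. Since $\bar y$ is the global minimizer of $\gl$ we have $\nabla\gl(\bar y) = 0$, and by $H4$ the map $\nabla\gl$ is a bijection of $Y$ onto $\rdl$ with $\nabla\gl^{-1} \in W^{1,\infty}(\rdl)$; in particular $\bar y = \nabla\gl^{-1}(0)$ and $\nabla\gl^{-1}$ is Lipschitz with constant at most $\cost_{-1}$. On the other hand $H6$ gives $\nabla\gl^{\ast}(\bar y_j) = 0$, so, writing $\bar y_j = \nabla\gl^{-1}\big(\nabla\gl(\bar y_j)\big)$,
\[
|\bar y - \bar y_j| = \big|\nabla\gl^{-1}(0) - \nabla\gl^{-1}\big(\nabla\gl(\bar y_j)\big)\big| \leq \cost_{-1}\,\big|\nabla\gl(\bar y_j) - \nabla\gl^{\ast}(\bar y_j)\big| \leq \cost_{-1}\,\|\nabla\gl - \nabla\gl^{\ast}\|_{L^{\infty}(Y)},
\]
where both $0$ and $\nabla\gl(\bar y_j)$ lie in the domain $\rdl$ of $\nabla\gl^{-1}$ because $\nabla\gl$ maps $Y$ onto $\rdl$ and $\bar y, \bar y_j \in Y$. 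Thus everything reduces to bounding $\|\nabla\gl - \nabla\gl^{\ast}\|_{L^{\infty}(Y)}$.

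Next I would expand $\nabla\gl = G'(\map)\nabla\map$ and $\nabla\gl^{\ast} = G'(\map^{\ast})\nabla\map^{\ast}$ by the chain rule, and add and subtract $G'(\map^{\ast})\nabla\map$ to write, pointwise on $Y$,
\[
\nabla\gl - \nabla\gl^{\ast} = \big(G'(\map) - G'(\map^{\ast})\big)\nabla\map + G'(\map^{\ast})\big(\nabla\map - \nabla\map^{\ast}\big).
\]
Using $H1$ and $H2$ (so that $\|G'\|_{L^{\infty}},\|G''\|_{L^{\infty}} \leq \cost_{G}$ and $\|\nabla\map\|_{L^{\infty}} \leq \cost_{\map}$), together with the mean value theorem applied to $G'$, this yields $|\nabla\gl(y) - \nabla\gl^{\ast}(y)| \leq \cost_{G}(1+\cost_{\map})\big(|\map(y) - \map^{\ast}(y)| + |\nabla\map(y) - \nabla\map^{\ast}(y)|\big)$ for every $y \in Y$, since $\cost_{G}\cost_{\map}\leq\cost_{G}(1+\cost_{\map})$ and $\cost_{G}\leq\cost_{G}(1+\cost_{\map})$. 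It therefore remains to control $\|\map - \map^{\ast}\|_{L^{\infty}(Y)} + \|\nabla\map - \nabla\map^{\ast}\|_{L^{\infty}(Y)}$.

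For this last step I would invoke $H5$: given any $y \in Y$, pick a training point $y_i$ with $|y - y_i| \leq \tfrac12 N^{-\frac{1}{d}}$ (half the diameter of the covering ball). Applying the triangle inequality at $y$ and $y_i$ to both $\map - \map^{\ast}$ and $\nabla\map - \nabla\map^{\ast}$ and \emph{adding} the two, one groups the terms as: the $\map,\nabla\map$ increments between $y$ and $y_i$, bounded by $\cost_{\map}|y-y_i|$ each via $H1$ (Lipschitz control of $\map$ and $\nabla\map$ by $\|\map\|_{W^{2,\infty}}$); the analogous $\map^{\ast},\nabla\map^{\ast}$ increments, bounded by $\cost_{\map^{\ast}}|y-y_i|$ each; and the residual $|\map(y_i)-\map^{\ast}(y_i)| + |\nabla\map(y_i)-\nabla\map^{\ast}(y_i)|$, which is at most $\er_{N,T}^{Lip,\ast}$ by the definition \eqref{eq:llf}, \eqref{eq:llf2} of the trained Lipschitz loss. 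The two copies of $\tfrac12 N^{-\frac1d}$ recombine to a single $N^{-\frac1d}$, giving $\|\map - \map^{\ast}\|_{L^{\infty}(Y)} + \|\nabla\map - \nabla\map^{\ast}\|_{L^{\infty}(Y)} \leq \er_{N,T}^{Lip,\ast} + (\cost_{\map}+\cost_{\map^{\ast}})N^{-\frac{1}{d}}$. Chaining this with the two earlier displays gives \eqref{eq:dnbd}.

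The chain-rule bookkeeping and the covering estimate are routine; the one point that genuinely needs care is the very first step, where one must ensure that $\nabla\gl(\bar y_j)$ lies in the domain $\rdl$ of $\nabla\gl^{-1}$ so that the global Lipschitz bound from $H4$ applies — this is exactly why $H3$ and $H4$ (a mild strengthening of strict convexity of $\gl$, guaranteeing that $\nabla\gl = v$ has a unique, Lipschitz-stable solution for every admissible $v$) are imposed rather than mere convexity. One should also note that the estimate is stated for the \emph{trained} network, so $\cost_{\map^{\ast}} = \|\map^{\ast}\|_{W^{2,\infty}}$ is an a posteriori quantity, finite precisely because $\sigma \in W^{2,\infty}(\R)$ is assumed.
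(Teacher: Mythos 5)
Your proposal is correct and follows essentially the same route as the paper's own proof: the identity $|\bar y-\bar y_j|=\bigl|\nabla\gl^{-1}(\nabla\gl(\bar y))-\nabla\gl^{-1}(\nabla\gl(\bar y_j))\bigr|$ combined with $H3$, $H4$, $H6$, the same add-and-subtract of $G'(\map^{\ast})\nabla\map$ in the chain-rule expansion, and the same triangle inequality through a nearby training point supplied by $H5$. The only (immaterial) differences are that you pass through $\|\nabla\gl-\nabla\gl^{\ast}\|_{L^{\infty}(Y)}$ rather than evaluating directly at $\bar y_j$, and you bound the sum $|\map-\map^{\ast}|+|\nabla\map-\nabla\map^{\ast}|$ jointly (using the half-diameter reading of $H5$) where the paper bounds each term separately; both yield exactly \eqref{eq:dnbd}.
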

\begin{proof}
Fix any $1 \leq j \leq N$ and let $\bar{y}_j$ be the minimizer of $\gl^{\ast}$, generated by the standard optimization algorithm \ref{alg:qn} for starting value $\tilde{y}_j$. We have the following inequalities, 
\begin{align*}
    |\bar{y} - \bar{y}_j| &= \left|\nabla \gl^{-1}\left(\nabla \gl(\bar{y})\right) - \nabla \gl^{-1}\left(\nabla \gl(\bar{y}_j)\right)\right| \quad {\rm by}~(H4), \\
    &\leq \cost_{-1} \left|\nabla \gl(\bar{y}) - \nabla \gl(\bar{y}_j)\right|, \quad {\rm by}~(H4), \\
    &= \cost_{-1}\left|\nabla \gl(\bar{y}_j)\right|, \quad {\rm by}~(H3), \\
    &= \cost_{-1} \left|\nabla \gl(\bar{y}_j) - \nabla \gl^{\ast}(\bar{y}_j)\right|, \quad {\rm by}~(H6),
\end{align*}
Using the straightforward calculation $\nabla \gl(y) = G^{\prime}(\map(y))\nabla \map(y)$ and similarly for $\nabla \gl^{\ast}$, in the above inequality yields,
\begin{equation}
    \label{eq:l1}
\begin{aligned}
  |\bar{y} - \bar{y}_j| &\leq  \cost_{-1} \left|G^{\prime}(\map(\bar{y}_j))\nabla \map(\bar{y}_j) -    G^{\prime}(\map^{\ast}(\bar{y}_j))\nabla \map^{\ast}(\bar{y}_j)
\right|    \\
&\leq \underbrace{\cost_{-1}\left|G^{\prime}(\map(\bar{y}_j))\nabla \map(\bar{y}_j) -    G^{\prime}(\map^{\ast}(\bar{y}_j))\nabla \map(\bar{y}_j)\right|}_{T_1}  \\
&+ \underbrace{\cost_{-1}\left|G^{\prime}(\map^{\ast}(\bar{y}_j))\nabla \map(\bar{y}_j) -    G^{\prime}(\map^{\ast}(\bar{y}_j))\nabla \map^{\ast}(\bar{y}_j)\right|}_{T_2}
\end{aligned}
\end{equation}
We estimate the terms $T_{1,2}$ as follows,
\begin{equation}
    \label{eq:l2}
\begin{aligned}
    T_1 &=\cost_{-1}\left|G^{\prime}(\map(\bar{y}_j))\nabla \map(\bar{y}_j) -    G^{\prime}(\map^{\ast}(\bar{y}_j))\nabla \map(\bar{y}_j)\right| \\
    &\leq \cost_{-1}\cost_{\map}\cost_{G}|\map(\bar{y}_j) - \map^{\ast}(\bar{y}_j)|, \quad {\rm by}~H1,H2,
    \end{aligned}
    \end{equation}
and
\begin{equation}
    \label{eq:l3}
\begin{aligned}
T_2 &= \cost_{-1}\left|G^{\prime}(\map^{\ast}(\bar{y}_j))\nabla \map(\bar{y}_j) -    G^{\prime}(\map^{\ast}(\bar{y}_j))\nabla \map^{\ast}(\bar{y}_j)\right| \\
&\leq \cost_{-1}\cost_{G}|\nabla \map(\bar{y}_j) - \nabla \map^{\ast}(\bar{y}_j)|, \quad {\rm by}~H2,
\end{aligned}
\end{equation}
As $\bar{y}_j \in Y$, by the hypothesis $H5$ on the training set, there exists an $1 \leq i_j \leq N$ such that $y_{i_j} \in \train$ and $|\bar{y}_j - y_{i_j}| \leq N^{-\frac{1}{d}}$. Hence, we can further estimate $T_{1,2}$ from the following inequalities,
\begin{equation}
    \label{eq:l4}
    \begin{aligned}
    |\map(\bar{y}_j) - \map^{\ast}(\bar{y}_j)| &\leq |\map(\bar{y}_j) - \map({y}_{i_j})| + |\map^{\ast}(\bar{y}_j) - \map^{\ast}({y}_{i_j})|
    + |\map({y}_{i_j}) - \map^{\ast}({y}_{i_j})| \\
    &\leq \er_{N,T}^{Lip,\ast} + \left(\cost_{\map} + \cost_{\map^{\ast}} \right) N^{-\frac{1}{d}} \quad {\rm by}~ H5~{\rm and}~\eqref{eq:llf},\eqref{eq:llf2},
    \end{aligned}
\end{equation}
Here, we recall that $\cost_{\map^{\ast}} = \|\map^{\ast}\|_{W^{2,\infty}}$. Similarly, we have,
\begin{equation}
    \label{eq:l5}
    \begin{aligned}
    |\nabla \map(\bar{y}_j) - \nabla\map^{\ast}(\bar{y}_j)| &\leq |\nabla\map(\bar{y}_j) - \nabla\map({y}_{i_j})| + |\nabla\map^{\ast}(\bar{y}_j) - \nabla\map^{\ast}({y}_{i_j})|
    + |\nabla\map({y}_{i_j}) - \nabla\map^{\ast}({y}_{i_j})| \\
    &\leq \er_{N,T}^{Lip,\ast} + \left(\cost_{\map} + \cost_{\map^{\ast}} \right) N^{-\frac{1}{d}} \quad {\rm by}~ H5~{\rm and}~\eqref{eq:llf},\eqref{eq:llf2},
    \end{aligned}
\end{equation}
Applying \eqref{eq:l4} and \eqref{eq:l5} into \eqref{eq:l2} and \eqref{eq:l3} and then substituting the result into \eqref{eq:lf1} yields the desired inequality \eqref{eq:dnbd}
\end{proof}
In practice, we do not know the exact minimizer $\bar{y}$ of the cost function $\gl$ and hence the minimum cost $\gl(\bar{y})$. Instead, we can monitor the effectiveness of the DNNopt algorithm \ref{alg:dnnopt} by either computing the \emph{range} of minimizers i.e., 
\begin{equation}
    \label{eq:ran}
    {\rm range}(\gl):= \max\limits_{1\leq i,j \leq N} |\gl(\bar{y}_i) - \gl(\bar{y}_j)|,
\end{equation}
or an empirical approximation to the standard deviation,
\begin{equation}
    \label{eq:std}
    {\rm std}(\gl):= \sqrt{\frac{1}{N} \sum\limits_{i=1}^N |\gl(\bar{y}_i) - \bar{\gl}|^2}, \quad \bar{\gl}:= \frac{1}{N}\sum\limits_{i=1}^N \gl(\bar{y}_i).
\end{equation}
A straightforward application of the estimate \eqref{eq:dnbd}, together with \eqref{eq:h1}, \eqref{eq:h2} yields,
\begin{equation}
    \label{eq:dnbd1}
    \max\{{\rm range}(\gl),{\rm std}(\gl)\} \leq 2(\cost_{\map}+\cost_{\map}^2)\cost_{-1}\cost^2_{G}\left(\er_{N,T}^{Lip,\ast} + \left(\cost_{\map} + \cost_{\map^{\ast}} \right) N^{-\frac{1}{d}}\right) 
\end{equation}
In order to simplify notation while tracking constants, by rescaling the underlying functions, we can assume that $\cost_{\map},\cost_{G} \leq \frac{1}{3}$, then the bound \eqref{eq:dnbd} simplifies to 
\begin{equation}
    \label{eq:dnbds}
    |\bar{y} - \bar{y}_j| \leq \frac{4\cost_{-1}}{9}\left(\er_{N,T}^{Lip,\ast} + \left(\frac{1}{3}+\cost_{\map^{\ast}}\right) N^{-\frac{1}{d}}\right)
\end{equation}
Next, we define a \emph{well-trained network} as a deep neural network $\map^{\ast}$, generated by the deep learning algorithm \ref{alg:DL}, which further satisfies for any training set $\train = \{y_i\}$, with $1 \leq i \leq N$, the assumptions,
\begin{equation}
    \label{eq:wtn}
    \cost_{\map^{\ast}} = \|\map^{\ast}\|_{W^{2,\infty}} \leq \frac{1}{3}, \qquad \er_{N,T}^{Lip,\ast} \leq \frac{1}{3} N^{-\frac{1}{d}}.
\end{equation}
Hence, for a well-trained neural network, the bound \eqref{eq:dnbds} further simplifies to,
\begin{equation}
    \label{eq:dn1}
    |\bar{y} - \bar{y}_j| \leq \frac{4\cost_{-1}}{9}N^{-\frac{1}{d}}
\end{equation}
In particular, the bound \eqref{eq:dn1} ensures convergence of all the approximate minimizers $\bar{y}_i$, generated by the DNNopt algorithm \ref{alg:dnnopt}, to the underlying minimizer, $\bar{y}$, of the optimization problem \eqref{eq:opt} as the number of training samples $N \rightarrow \infty$.
\subsubsection{Discussion on the assumptions in Lemma~\ref{lem:1}}
The estimates \eqref{eq:dnbd}, \eqref{eq:dn1} need several assumptions on the underlying map, cost function and trained neural network to hold. We have the following comments about these assumptions,
\begin{itemize}
    \item The assumptions $H1$ and $H2$ entail that the underlying map $\map$ and function $G$ are sufficiently regular i.e., $W^{2,\infty}$. A large number of underlying maps for PDE constrained optimization, particularly for elliptic and parabolic PDEs satisfy these assumptions \cite{BS1}. 
    \item The assumptions $H3$ and $H4$ essentially amount to requiring that the cost function $\gl$ is strictly convex. It is standard in optimization theory to prove guarantees on the algorithms for convex cost functions, while expecting that similar estimates will hold for approximating \emph{local minima} of non-convex functions, see for instance \cite{adam}. 
    \item The hypothesis $H5$ on the training set is an assumption on how the training set \emph{covers} the underlying domain \cite{CS1}. It can certainly be relaxed from a deterministic to a random selection of points with some further notational complexity, see \cite{CALD} for space filling arguments. It also stems from an \emph{equidistribution} requirement on the training set and can be satisfied by low-discrepancy sequences such as Sobol points. 
    \item The use of Lipschtiz loss functions \eqref{eq:llf} for training the neural network is necessitated by the need to control the neural network pointwise. We can also use a \emph{Sobolev} loss function of the form,
    \begin{equation}
        \label{eq:lsob}
        \er_{N,T}^{Sob}(\theta) = \frac{1}{N}\left(\sum\limits_{i=1}^N |\map(y_i) - \map_{\theta}(y_i)|^s + \sum\limits_{i=1}^N |\nabla\map(y_i) - \nabla\map_{\theta}(y_i)|^s +\sum\limits_{i=1}^N |\nabla^2\map(y_i) - \nabla^2\map_{\theta}(y_i)|^s\right), 
    \end{equation}
with $s > d$. By Morrey's inequality, minimizing the above loss function will automatically control the differences between the (derivatives of) the underlying map and the trained neural network, pointwise and an estimate, similar to \eqref{eq:dnbd}, can be obtained. Using an $L^p$-loss function such as \eqref{eq:lf2} may be insufficient in providing control on pointwise differences between minima of the underlying cost function and approximate minima generated by the DNNopt algorithm \ref{alg:dnnopt}.     
\item The hypothesis $H6$ on the standard optimization algorithm \ref{alg:qn} is only for notational convenience and can readily be relaxed to an approximation i.e.,  $|\nabla\gl^{\ast}(\bar{y}_i)| \leq \epsilon$, for each $i$ and for a small tolerance $\epsilon \ll 1$. This merely entails a minor correction to the estimate \eqref{eq:dnbd}. Quasi-Newton algorithms are well known to converge to such approximate local minima \cite{lbfgs}.
\item The point of assuming \emph{well-trained networks} in the sense of \eqref{eq:wtn} is to simplify the estimates \eqref{eq:dnbd}, \eqref{eq:dnbd1}. The first inequality in \eqref{eq:wtn} is an assumption on uniform bounds on the trained neural networks (and their gradients) with respect to the number of training samples. The constant $1/3$ is only for notational convenience and can be replaced by any other constant. Note that we are not explicitly ensuring that the first inequality in \eqref{eq:wtn} is always satisfied. This can be readily monitored in practice and can be ensured by adding suitable regularization terms in the loss function \eqref{eq:lf2}. On the other hand, the second inequality in \eqref{eq:wtn} is an assumption on the training error which greatly serves to simplify the complexity estimates but is not necessary. The essence of this assumption is to require that training errors are smaller than the generalization gap. In general, there are no rigorous estimates on the training error with a stochastic gradient method as a very high-dimensional non-convex optimization problem is being solved.
\item In Lemma \ref{lem:1}, we have assumed that the neural network $\map^{\ast} \in W^{2,\infty}(Y)$. This rules out the use of ReLU activation function but allows using other popular activation functions such as the hyperbolic tangent. 
\end{itemize}
\section{Iterative surrogate model optimization (ISMO)}
\label{sec:4}
Even with all the hypotheses on the underlying map, cost function and trained neural network, the bound \eqref{eq:dnbd} (and the simplified bound \eqref{eq:dn1}) highlight a fundamental issue with the DNNopt algorithm \ref{alg:dnnopt}. This can be seen from the right hand side of \eqref{eq:dnbd}, \eqref{eq:dn1}, where the error with respect to approximating the minimum $\bar{y}$ of the underlying cost function in \eqref{eq:opt}, decays with a dimension dependent exponent $\frac{1}{d}$, in the number of training samples (and hence complexity of the algorithm). This exponent implies a very strong \emph{curse of dimensionality} for the DNNopt algorithm \ref{alg:dnnopt}. In particular, for optimization problems in even moderate dimensions, there could be a slow decay of the error as well as a large variance, see estimate \eqref{eq:dnbd1} on the range and standard deviation, of the computed minimizers with the DNNopt algorithm, making it very sensitive to the starting values and other hyperparameters of the underlying optimization algorithm \ref{alg:qn}. This slow rate of decay necessitates a large number of training samples, significantly increasing the cost of the algorithm and reducing its competitiveness with respect to standard optimization algorithms. 

As mentioned in the introduction, one possible cause of the slow rate of error decay in bound \eqref{eq:dnbd} lies in the fact that the training set $\train$ for the deep learning algorithm \ref{alg:DL} is chosen \emph{a priori} and does not reflect any information about the location of (local) minima of the underlying cost function in \eqref{eq:opt}. It is very likely that incorporating such information will increase the accuracy of the deep learning algorithm around minima. We propose the following iterative algorithm to incorporate such information,
\begin{algorithm}
\label{alg:ismo}
{\bf ISMO: Iterative Surrogate Model Optimization algorithm for PDE constrained optimization.}
\begin{itemize}
    \item [{\bf Inputs}] Parametrized PDE \eqref{eq:ppde}, observable \eqref{eq:ptoob}, cost function \eqref{eq:cost2}, \emph{initial} training set $\train_0 \subset Y$ with $\train_0 = \{y^0_i\}$ for $1 \leq i \leq N_0$ (either randomly chosen or suitable quadrature points such as low discrepancy sequences), standard optimization algorithm \ref{alg:qn}, hyperparameters $N_0,\Delta N$ with $\Delta N \leq N_0$
    \item [{\bf Goal}] Compute (approximate) minimizers for the PDE constrained optimization problem \eqref{eq:opt}. 
     \item [{\bf Step $k=0$}] Given initial training set $\train_0 = \{ y^0_i \}$, with $y^0_i \in Y$ for $1\leq i \leq N_0$,  generate the deep neural network surrogate map $\map_0^{\ast}$ by running the deep learning algorithm \ref{alg:DL}. Set $\gl_0^{\ast}(y) = G(\map_0^{\ast}(y))$ for all $y \in Y$, with the cost function $G$ defined in \eqref{eq:cost2}. 
     
Given \emph{batch size} $\Delta N \leq N_0$, draw $\Delta N$ random starting values $\tilde{y}^{0}_1, \ldots, \tilde{y}^0_{\Delta N}$ in $Y$. Run the standard optimization algorithm \ref{alg:qn} with $\tilde{y}^0_j$  as starting values and cost function $\gl_0^{\ast}$ till tolerance, to obtain the set $\bar{\train}_0 = \{\bar{y}^0_j\}$, for $1 \leq j \leq \Delta N$, of approximate minimizers to the optimization problem \ref{eq:opt}. Set 
\begin{equation}
    \label{eq:s1}
    \train_1 = \train_0 \cup \bar{\train}_0
\end{equation}

 \item [{\bf Step $k\geq 1$}] Given training set $\train_k$ , generate the deep neural network surrogate map $\map_k^{\ast}$ by running the deep learning algorithm \ref{alg:DL}. Set $\gl_k^{\ast}(y) = G(\map_k^{\ast}(y))$ for all $y \in Y$, with the cost function $G$ defined in \eqref{eq:cost2}. 
     
Draw $\Delta N$ random starting values $\tilde{y}^{k}_1, \ldots, \tilde{y}^k_{\Delta N}$.
     
Run the standard optimization algorithm \ref{alg:qn} with $\tilde{y}^{k}_j$, $1 \leq j \leq \Delta N$,  as starting values and with cost function $\gl_k^{\ast}$ till tolerance, to obtain the set $\bar{\train}_k = \{\bar{y}^k_j\}$, for $1 \leq j \leq \Delta N$, of approximate minimizers to the optimization problem \ref{eq:opt}. Set 
\begin{equation}
    \label{eq:sk}
    \train_{k+1} = \train_k \cup \bar{\train}_k
\end{equation}
\item If for some tolerance $\epsilon \ll 1$,
\begin{equation}
    \label{eq:scrit}
    \left|\frac{1}{\Delta N}\sum\limits_{j=1}^{\Delta N} \gl(\bar{y}^k_j) - \frac{1}{\Delta N}\sum\limits_{j=1}^{\Delta N} \gl(\bar{y}^{k-1}_j)\right| \leq \epsilon,
\end{equation}
then terminate the iteration. Else, continue to step $k+1$.
    
    \end{itemize}
    \end{algorithm}
\begin{remark}
The key difference between the ISMO algorithm \ref{alg:ismo} and DNNopt algorithm \ref{alg:dnnopt} lies in the structure of the training set. In contrast to the DNNopt algorithm, where the training set was chosen a priori, the training set in the ISMO algorithm is augmented at every iteration, by adding points that are identified as approximate local minimizers (by the standard optimization algorithm \ref{alg:qn}) of the cost function $\gl^{\ast}_k$, corresponding to the surrogate neural network model $\map_k^{\ast}$. Thus, a sequence of neural network surrogates are generated, with each successive iteration likely producing a DNN surrogate with greater accuracy around local minima of the underlying cost function. Given this feedback between neural network training and optimization of the underlying cost function, we consider ISMO as an example of an \emph{active learning} algorithm \cite{AL}, with the deep learning algorithm \ref{alg:DL}, playing the role of the learner, querying the standard optimization algorithm \ref{alg:qn}, which serves as the \emph{teacher} or \emph{oracle}, for obtaining further training data.
\end{remark}
\subsection{Analysis of the ISMO algorithm \ref{alg:ismo}}
\label{sec:an2}
Does the ISMO algorithm \ref{alg:ismo} lead to more accuracy than the DNNopt algorithm \ref{alg:dnnopt}? We will investigate this question within the framework used for the analysis of the DNNopt algorithm in section \ref{sec:an1}. We recall that a modified Lipschitz loss function \eqref{eq:llf} was used for training the underlying deep neural network in the supervised learning algorithm \ref{alg:DL}. We will continue to use this Lipschitz loss function. Hence, at any step $k$ in algorithm \ref{alg:ismo}, with training set $\train_k$, the following loss function is used,
\begin{equation}
    \label{eq:llfk}
    \er_{T}^{Lip,k}(\theta) = \max\limits_{y_i \in \train_k} \left|\map(y_i)-\map_{\theta}(y_i)\right| + \left|\nabla\map(y_i)-\nabla\map_{\theta}(y_i)\right|
\end{equation}
Then, a stochastic gradient descent algorithm is run in order to find 
\begin{equation}
    \label{eq:llfk1}
    \theta^{\ast}_k = \argmin\limits_{\theta \in \Theta}  \er_{T}^{Lip,k}(\theta).
\end{equation}
We denote the trained neural network as $\map^{\ast}_k = \map_{\theta^{\ast}_k}$ and set 
\begin{equation}
    \label{eq:llfk2}
    \er_{T}^{Lip,k,\ast} = \er_{T}^{Lip}(\theta^{\ast}_k)
\end{equation}
We have the following bound on the minimizers, computed with the ISMO algorithm \ref{alg:ismo},
\begin{lemma}
\label{lem:2}
Let the underlying map $\map$ in \eqref{eq:opt} satisfy $H1$ with $\cost_{\map} \leq \frac{1}{3}$, the function $G$ satisfy $H2$ with constant $\cost_{G} \leq \frac{1}{3}$, the cost function $\gl$ satisfy $H3$ and $H4$. Let the neural networks at every step $k$ in algorithm \ref{alg:ismo} be trained with the Lipschtiz loss functions \eqref{eq:llfk} and local minimizers of the cost function $\gl_k^{\ast}$, computed with the standard optimization algorithm \ref{alg:qn} satisfy $H6$. Let the trained neural network $\map_0^{\ast}$ be well-trained in the sense of \eqref{eq:wtn} and the number of initial training points, $N_0 = \#(\train_0)$ be chosen such that
\begin{equation}
    \label{eq:l21}
    N_0 \geq \left(\frac{\cost}{\sigma_0}\right)^d, \quad {\rm for~some}~ \sigma_0 < 1,
\end{equation}
with constant $\cost = \frac{4 \cost_{-1}}{9}$. 

Given $\sigma_0$ in \eqref{eq:l21}, define $\sigma_k$ for $k \geq 1$ by,
\begin{equation}
    \label{eq:sk}
    \sigma_k = \er_T^{Lip,k,\ast} + \left(\frac{1}{3} + \cost_{\map_k^{\ast}} \right) \frac{\sigma_{k-1}}{(\Delta N)^{\frac{1}{d}}}.
\end{equation}
Under the assumption that the local minimizers $\bar{y}^k_j$, for $1 \leq j \leq \Delta N$ and at every $k \geq 0$ also satisfy,
\begin{equation}
    \label{eq:h7}
(H7)    \quad \bar{y}^k_j \in \cup_{i=1}^{\Delta N} B\left(\bar{y}^{k-1}_i,\frac{\sigma_{k-1}}{(\Delta N)^{\frac{1}{d}}} \right),
\end{equation}
then the approximate minimizers $\bar{y}^k_j$, for all $k \geq 1$, $1 \leq j \leq \Delta N$ satisfy the bound,
\begin{equation}
    \label{eq:isbd1}
    |\bar{y} - \bar{y}^k_j| \leq \cost \sigma_k,
\end{equation}
with $\sigma_k$ defined in \eqref{eq:sk}. 

In particular, if at every step $k$, the trained neural network $\map^{\ast}_k$ is \emph{well-trained} i.e.,  it satisfies,
\begin{equation}
    \label{eq:wtnk}
    \cost_{\map^{\ast}} = \|\map^{\ast}_k\|_{W^{2,\infty}} \leq \frac{1}{3}, \qquad \er_{N,T}^{Lip,k,\ast} \leq \frac{1}{3}\frac{\sigma_{k-1}}{(\Delta N)^{\frac{1}{d}}},
\end{equation}
then, the minimizers, generated by the ISMO algorithm \ref{alg:ismo} satisfy the bound,
\begin{equation}
    \label{eq:isbd}
    |\bar{y} - \bar{y}^k_j| \leq \frac{\cost^k\sigma_0}{(\Delta N)^{\frac{k}{d}}}, \quad \forall k \geq 1.
\end{equation}
\end{lemma}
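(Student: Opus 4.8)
The plan is to run, at each iteration $k$, the argument of Lemma~\ref{lem:1}, but to replace its use of the a priori covering hypothesis $H5$ of the domain $Y$ by the iterative covering hypothesis $H7$. What makes this legitimate is that $\train_k = \train_{k-1}\cup\bar{\train}_{k-1}$, so the minimizers $\{\bar{y}^{k-1}_i\}_{i=1}^{\Delta N}$ produced at the previous step are themselves training points for $\map_k^{\ast}$; by $H7$ they cover the new minimizers $\bar{y}^k_j$ at the fine scale $\sigma_{k-1}(\Delta N)^{-1/d}$, which is precisely the role played in Lemma~\ref{lem:1} by the grid of $H5$ at scale $N^{-1/d}$. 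First I would handle the base case $k=0$: since $\map_0^{\ast}$ is well trained in the sense of \eqref{eq:wtn} and $\train_0$ covers $Y$ at scale $N_0^{-1/d}$, the simplified estimate \eqref{eq:dn1} of Lemma~\ref{lem:1} gives $|\bar{y}-\bar{y}^0_j|\leq\cost\,N_0^{-1/d}$ for all $j$, and the choice \eqref{eq:l21} of $N_0$ upgrades this to $|\bar{y}-\bar{y}^0_j|\leq\sigma_0$; this anchors the recursion and makes $H7$ meaningful at $k=1$.

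Next comes the inductive step proving \eqref{eq:isbd1}. Fix $k\geq 1$ and $1\leq j\leq\Delta N$. Exactly as in the opening chain of Lemma~\ref{lem:1}, invertibility of $\nabla\gl$ together with $\nabla\gl(\bar{y})=0$ ($H3$, $H4$) and $\nabla\gl_k^{\ast}(\bar{y}^k_j)=0$ ($H6$) give $|\bar{y}-\bar{y}^k_j|\leq\cost_{-1}\,\bigl|\nabla\gl(\bar{y}^k_j)-\nabla\gl_k^{\ast}(\bar{y}^k_j)\bigr|$, and inserting $\nabla\gl=G'(\map)\nabla\map$ (and likewise for $\gl_k^{\ast}$) splits the right-hand side into the two terms $T_1,T_2$ of \eqref{eq:l1}, which by $H1$, $H2$ are bounded by $\cost_{-1}\cost_{\map}\cost_{G}\,|\map(\bar{y}^k_j)-\map_k^{\ast}(\bar{y}^k_j)|$ and $\cost_{-1}\cost_{G}\,|\nabla\map(\bar{y}^k_j)-\nabla\map_k^{\ast}(\bar{y}^k_j)|$. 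Here I would use $H7$ in place of $H5$: pick $i$ with $|\bar{y}^k_j-\bar{y}^{k-1}_i|\leq\sigma_{k-1}(\Delta N)^{-1/d}$, observe that $\bar{y}^{k-1}_i\in\train_k$, and run the triangle inequality through $\bar{y}^{k-1}_i$ using the $W^{2,\infty}$ bounds $\cost_{\map}$, $\cost_{\map_k^{\ast}}$ (as Lipschitz constants for $\map$ and $\nabla\map$, and for $\map_k^{\ast}$, $\nabla\map_k^{\ast}$) together with the definition \eqref{eq:llfk}, \eqref{eq:llfk2} of $\er_{T}^{Lip,k,\ast}$; this bounds both $|\map(\bar{y}^k_j)-\map_k^{\ast}(\bar{y}^k_j)|$ and $|\nabla\map(\bar{y}^k_j)-\nabla\map_k^{\ast}(\bar{y}^k_j)|$ by $\er_{T}^{Lip,k,\ast}+(\cost_{\map}+\cost_{\map_k^{\ast}})\sigma_{k-1}(\Delta N)^{-1/d}$, which is $\leq\sigma_k$ by $\cost_{\map}\leq\tfrac13$ and the definition \eqref{eq:sk}. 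Collecting the constants exactly as in Lemma~\ref{lem:1} (with $\cost_{\map},\cost_{G}\leq\tfrac13$, so that $\cost_{-1}\cost_{G}(1+\cost_{\map})\leq\tfrac{4\cost_{-1}}{9}=\cost$) then gives \eqref{eq:isbd1}.

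For the explicit rate \eqref{eq:isbd} I would feed the well-trained bounds \eqref{eq:wtnk} into the recursion \eqref{eq:sk}: with $\cost_{\map_k^{\ast}}\leq\tfrac13$ and $\er_{T}^{Lip,k,\ast}\leq\tfrac13\,\sigma_{k-1}(\Delta N)^{-1/d}$ one obtains $\sigma_k\leq\sigma_{k-1}(\Delta N)^{-1/d}$, so a one-line induction gives $\sigma_k\leq\sigma_0(\Delta N)^{-k/d}$; combined with \eqref{eq:isbd1} this produces the exponential-in-$k$ decay, with the leading constant collecting as in Lemma~\ref{lem:1} into the form stated in \eqref{eq:isbd}.

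The main obstacle is conceptual rather than computational: everything rests on recognizing that the augmented set $\bar{\train}_{k-1}$ acts as a \emph{local, progressively refined} analogue of the global grid $H5$, so that $H7$ replaces the covering scale $N^{-1/d}$ of Lemma~\ref{lem:1} by $\sigma_{k-1}(\Delta N)^{-1/d}$. Once this identification is made, the per-iteration error is multiplied by $(\Delta N)^{-1/d}$ instead of improving only from $N^{-1/d}$ to $(N+\Delta N)^{-1/d}$, which is exactly what turns the curse-of-dimensionality rate of Lemma~\ref{lem:1} into a geometric one. The one genuinely delicate point in writing this out cleanly is the bookkeeping: $H7$ is phrased in terms of the defined sequence $\sigma_{k-1}$ rather than the a posteriori bound $\cost\sigma_{k-1}$, so one must keep the recursion \eqref{eq:sk} and the well-trained normalization \eqref{eq:wtnk} aligned across all $k$ for the geometric factor to close.
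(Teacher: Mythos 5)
Your proposal is correct and follows essentially the same route as the paper: the base case is handled by applying the simplified DNNopt estimate \eqref{eq:dn1} together with the choice \eqref{eq:l21} of $N_0$, and the inductive step reruns the chain of inequalities from Lemma \ref{lem:1} (hypotheses $H3$, $H4$, $H6$, the splitting into the two terms, and the $W^{2,\infty}$ bounds) with the covering hypothesis $H5$ replaced by $H7$ at the refined scale $\sigma_{k-1}(\Delta N)^{-1/d}$, before closing the geometric recursion via \eqref{eq:wtnk}. The only (cosmetic) divergence is in the final constant: your recursion yields $|\bar{y}-\bar{y}^k_j|\leq \cost\,\sigma_0 (\Delta N)^{-k/d}$, which implies the stated bound \eqref{eq:isbd} whenever $\cost\geq 1$ and is in fact the form that follows directly from \eqref{eq:isbd1} and \eqref{eq:sk}.
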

\begin{proof}
We observe that step $k=0$ of the ISMO algorithm \ref{alg:ismo} is identical to the DNNopt algorithm \ref{alg:dnnopt}, with training set $\train_0$ and $N_0$ training samples.  As we have required that all the hypothesis, $H1-H6$ for Lemma~\ref{lem:1} holds and the neural network $\map^{\ast}_0$ is well-trained in the sense of \eqref{eq:wtn}, we can directly apply estimate \eqref{eq:dn1} to obtain that
\begin{equation}
\label{eq:l22}
|\bar{y} - \bar{y}^0_j| \leq \cost N_0^{-\frac{1}{d}} \leq \sigma_0 < 1, \quad {\rm by}~ \eqref{eq:l21}.
\end{equation}
We will prove the bound \eqref{eq:isbd1} by induction. Assume that \eqref{eq:isbd1} holds for step $k-1$. Given the training set $\train_k$, we obtain the trained neural network $\map^{\ast}_k$ by minimizing the Lipschitz loss function \eqref{eq:llfk} and with starting values $\tilde{y}^{k-1}_j$, for $1 \leq j \leq \Delta N$, we run the standard optimization algorithm \ref{alg:qn} to generate approximate minimizers $\bar{y}^k_j$. We proceed as in the proof of Lemma~\ref{lem:1} to calculate the following,
\begin{align*}
    |\bar{y} - \bar{y}^k_j| &= \left|\nabla \gl^{-1}\left(\nabla \gl(\bar{y})\right) - \nabla \gl^{-1}\left(\nabla \gl(\bar{y}^k_j)\right)\right| \quad {\rm by}~(H4), \\
    &\leq \cost_{-1} \left|\nabla \gl(\bar{y}) - \nabla \gl(\bar{y}^k_j)\right|, \quad {\rm by}~(H4), \\
    &= \cost_{-1}\left|\nabla \gl(\bar{y}^k_j)\right|, \quad {\rm by}~(H3), \\
    &= \cost_{-1} \left|\nabla \gl(\bar{y}^k_j) - \nabla \gl_k^{\ast}(\bar{y}^k_j)\right|, \quad {\rm by}~(H6), \\
&=  \cost_{-1} \left|G^{\prime}(\map(\bar{y}^k_j))\nabla \map(\bar{y}^k_j) -    G^{\prime}(\map_k^{\ast}(\bar{y}^k_j))\nabla \map_k^{\ast}(\bar{y}^k_j)
\right|    \\
&\leq \underbrace{\cost_{-1}\left|G^{\prime}(\map(\bar{y}^k_j))\nabla \map(\bar{y}^k_j) -    G^{\prime}(\map^{\ast}_k(\bar{y}^k_j))\nabla \map(\bar{y}^k_j)\right|}_{T_3}  
+ \underbrace{\cost_{-1}\left|G^{\prime}(\map^{\ast}_k(\bar{y}^k_j))\nabla \map(\bar{y}_j) -    G^{\prime}(\map^{\ast}_k(\bar{y}^k_j))\nabla \map_k^{\ast}(\bar{y}^k_j)\right|}_{T_4}
\end{align*}
As in the proof of Lemma~\ref{lem:1}, we can estimate,
\begin{align*}
     T_3 &\leq \cost_{-1}\cost_{\map}\cost_{G}|\map(\bar{y}^k_j) - \map^{\ast}_k(\bar{y}^k_j)| \leq \frac{\cost_{-1}}{9}|\map(\bar{y}^k_j) - \map^{\ast}_k(\bar{y}^k_j)| , \quad {\rm by}~H1,H2, \\
     T_4 & \leq \cost_{-1}\cost_{G}|\nabla \map(\bar{y}^k_j) - \nabla \map^{\ast}_k(\bar{y}^k_j)| \leq \frac{\cost_{-1}}{3}|\map(\bar{y}^k_j) - \map^{\ast}_k(\bar{y}^k_j)|, \quad {\rm by}~H2.
\end{align*}
By the hypothesis $H7$ on the approximate minimizers, there exists an $1 \leq i_{j,k} \leq \Delta N$ such that $\bar{y}^{k-1}_{i_{j,k}} \in \train_k$ and $|\bar{y}^k_j - \bar{y}^{k-1}_{i_{j,k}}| \leq \frac{\sigma_{k-1}}{(\Delta N)^{\frac{1}{d}}}$. Hence, we can further estimate from the Lipschitz loss function \eqref{eq:llfk}, completely analogously to the estimates \eqref{eq:l4} and \eqref{eq:l5} to obtain that,
\begin{align*}
    |\bar{y} - \bar{y}^k_j| &\leq \cost\left(\er_T^{Lip,k,\ast} + \left(\frac{1}{3} + \cost_{\map_k^{\ast}} \right) \frac{\sigma_{k-1}}{(\Delta N)^{\frac{1}{d}}}\right),
\end{align*}
which with definition \eqref{eq:sk} is the desired inequality \eqref{eq:isbd1} for step $k$.

A straightforward application of the definitions \eqref{eq:wtnk} for well-trained networks $\map^{\ast}_k$ for $k \geq 1$, together with a recursion on the inequality \eqref{eq:isbd1} yield the bound \eqref{eq:isbd}. 
\end{proof}
\begin{remark}
In addition to the assumptions $H1-H6$ in section \ref{sec:an1}, we also need another assumption $H7$ for obtaining bounds on the minimizers, generated by the ISMO algorithm. This assumption amounts to requiring that during the iterations, the optimizers of \eqref{eq:llfk}, at a given step $k$, remain close to optimizers of \eqref{eq:llfk}, at the previous step $k-1$. This seems to be a \emph{continuity} requirement on the minimizers. It is justified as long as the first step $k=0$ is able to provide a good approximation to the underlying (global) minimum and successive iterations improve the approximation of this global minimum. On the other hand, for non-convex cost functions, it could happen that such an assumption is violated near local minima. However, given the fact that the starting values for the standard optimization algorithm, at any iteration step $k \geq 1$ of the ISMO algorithm \ref{alg:ismo}, are chosen randomly, we would expect that this provides a pathway for the algorithm to escape from unfavorable local minima.  
\end{remark}
\begin{remark}
The assumptions on \emph{well-trained networks} \eqref{eq:wtnk}, at each step of the ISMO algorithm \ref{alg:ismo} is for conceptual and notational simplicity. An estimate, analogous to \eqref{eq:isbd}, can be proved without resorting to assuming \eqref{eq:wtnk}.   

\end{remark}
\subsection{Comparison between DNNopt and ISMO}
The estimates \eqref{eq:dn1} and \eqref{eq:isbd} provide a quantitative comparison of accuracy between the DNNopt algorithm \ref{alg:dnnopt} and the \emph{active learning} ISMO algorithm \ref{alg:ismo}.

To realize this, we further assume that the ISMO algorithm converges to the desired tolerance within $K$ iterations and that the \emph{batch size} $\Delta N = c N_0$, for some fraction $c \leq 1$. Hence, the accuracy of the ISMO algorithm is quantified by \eqref{eq:isbd} as,
\begin{equation}
    \label{eq:isk}
    |\bar{y} - \bar{y}_j^K| \leq \frac{\cost^K\sigma_0}{c^{\frac{K}{d}} N_0^{\frac{K}{d}}}.
\end{equation}
As we have assumed that the cost of both DNNopt and ISMO algorithms is dominated by the cost of generating the training samples, the cost of the ISMO algorithm with $K$ iterations is determined by the total number of training samples i.e.,  $(1+cK)N_0$. For exactly the same number of training samples, the accuracy of the DNNopt algorithm \ref{alg:dnnopt} is given by the estimate \eqref{eq:dn1} as,
\begin{equation}
    \label{eq:dnk}
    |\bar{y} - \bar{y}_j| \leq \frac{\cost}{\left(1+cK\right)^{\frac{1}{d}}N_0^{\frac{1}{d}}}.
\end{equation}
As (fixed) $N_0 \gg 1$ and $c \approx 1$, as long as $\cost \sim \Ord(1)$, we see that the estimate \eqref{eq:isk} suggests an \emph{exponential decay} of the approximation error of the underlying minimum wrt the size of the training set for the ISMO algorithm, in contrast to the \emph{algebraic decay} for the DNNopt algorithm, at-least within the framework of Lemmas \ref{lem:1} and \ref{lem:2}. 

This contrast can also be directly translated to the \emph{range} \eqref{eq:ran} and \emph{standard deviation} \eqref{eq:std} of the approximate minimizers for both algorithms. Straightforward calculations with the definitions \eqref{eq:ran}, \eqref{eq:std} and estimates \eqref{eq:isk}, \eqref{eq:dnk} reveal an exponential decay for the ISMO algorithm of the form
\begin{equation}
    \label{eq:isk1}
    \max\{{\rm range}(\gl),{\rm std}(\gl)\} \sim \Ord\left(N_0^{-\frac{K}{d}}\right),
\end{equation}
whereas there is an algebraic decay for the DNNopt algorithm of the form,
\begin{equation}
    \label{eq:dnk1}
    \max\{{\rm range}(\gl),{\rm std}(\gl)\} \sim \Ord\left((KN_0)^{-\frac{1}{d}}\right).
\end{equation}
These estimates \eqref{eq:isk1} and \eqref{eq:dnk1} suggest that the ISMO algorithm may have significantly lower variance (sensitivity) of approximate minimizers of the underlying optimization problem \eqref{eq:opt}, when compared to the DNNopt algorithm. In particular, these estimates highlight the role of the iterated feedback between the learner (the deep learning algorithm \ref{alg:DL}) and the oracle (the standard optimization algorithm \ref{alg:qn}) in providing the training data, that exponentially improves the approximation of the underlying minima, in each successive step. 

\begin{figure}[h!]
\centering
\includegraphics[width=0.98\textwidth]{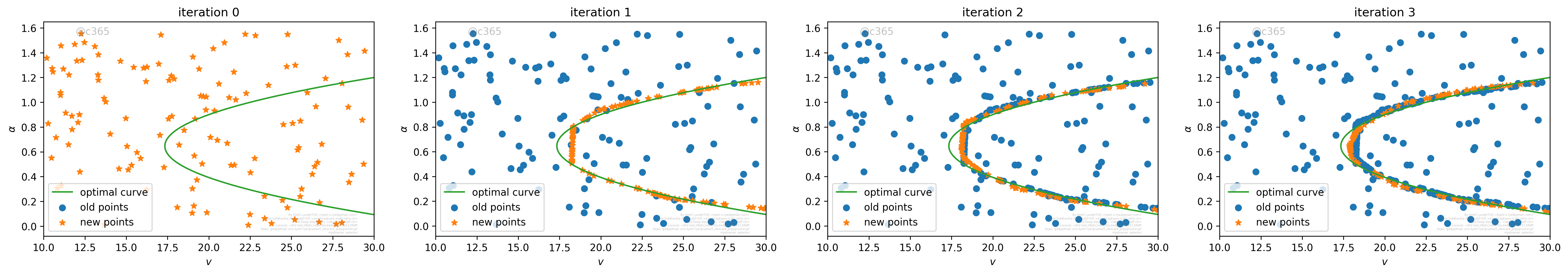}
\caption{Evolution of training sets $\train_k$, for different iterations $k$, for the ISMO algorithm \ref{alg:ismo} for the optimal control of projectile motion example. The newly added  training points $\bar{\train}_k$ are the orange dots, while existing training points $\train_k$ are blue dots. The exact minimizers are placed on the green parabola. Note how most of the newly added training points lie on this parabola.}
\label{fig:proj1}
\end{figure}    
    
\section{Numerical experiments}
\label{sec:5}
The theory presented in sections \ref{sec:3} and \ref{sec:4} suggests that the approximate minimizers computed by the DNNopt algorithm \ref{alg:dnnopt} and the ISMO algorithm \ref{alg:ismo}, converge to (local) minima of the underlying optimization problem \ref{eq:opt}, as the number of training samples is increased. Moreover, the theory also suggests that the ISMO algorithm converges significantly faster than the DNNopt algorithm and can have significantly lower variance (sensitivity) of the computed minima. However, these theoretical results were obtained under rather stringent regularity hypotheses on the underlying observable $\map$, convexity hypothesis on the cost function $\gl$, use of Lipschitz loss functions \eqref{eq:llf}, \eqref{eq:llfk}, and other hypotheses on the distribution of optimizers \eqref{eq:h7}. Many of these hypotheses may not be valid  for PDE constrained optimization problems of practical interest. Will the DNNopt and ISMO algorithms, work as predicted by the proposed theory, for these realistic problems ? In particular, will ISMO significantly outperform DNNopt in terms of accuracy and robustness ? We investigate these questions by a series of numerical experiments in this section.

\subsection{Details of implementation}
The implementation of both the DNNopt algorithm \ref{alg:dnnopt} and the ISMO algorithm \ref{alg:ismo}, is performed in Python utilizing the Tensorflow framework~\cite{tensorflow2015-whitepaper} with Keras~\cite{chollet2015keras} for the machine learning specific parts, while SciPy~\cite{2020SciPy-NMeth} is used for handling optimization of the objective function. The code is open-source, and can be downloaded from \url{https://github.com/kjetil-lye/iterative_surrogate_optimization}. New problem statements can be readily implemented without intimate knowledge of the algorithm.

The architecture and hyperparameters for the underlying neural networks in both algorithms are specified in Table \ref{tab:parameters}. To demonstrate the robustness of the proposed algorithms, we train an ensemble of neural networks with 10 different starting values and evaluate the resulting statistics. 

By utilizing the job chaining capability of the IBM Spectrum LSF system~\cite{IBM-Spectrum-LSF}, we are able to create a process-based system where each iteration of the ISMO algorithm is run as a separate job. The novel use of job chaining enables us to differentiate the job parameters (number of compute nodes, runtime, memory, etc.) for each step of the algorithm, without wasting computational resources.

\begin{table}[h]
    \centering
    \begin{tabular}{ccccccc}
        \toprule
         Width & Depth & Regulariser & Optimizer & Loss function & Learning rate & Activation function\\
         \midrule
         20 & 8 & L$^2$: $7.8\cdot 10^{-7}$ & Adam & MSE & 0.001& ReLU\\
         \bottomrule
    \end{tabular}
    \caption{Training parameters for the experiments done in section \ref{sec:5}.}
    \label{tab:parameters}
\end{table}
\subsection{Optimal control of projectile motion}

\begin{table}[h]
    \centering
    \begin{tabular}{ccccccc}
        \toprule
         $\mathbf{x}_0$ &  $g$ & $C_D$ & $m$ & $r$ & $\rho$ & $\Delta t$\\
         \midrule
         $(0.2, 0.5)$ & $9.81$ & $0.1$ & $0.142$ & $0.22$ & $1.1455$ & $0.01$\\
         \bottomrule
    \end{tabular}
    \caption{Parameters for the projectile motion experiment in section \ref{sec:5}.}
    \label{tab:projectile_motion_parameters}
\end{table}
This problem was proposed in a recent paper \cite{LMM1} as a prototype for using deep neural networks to learn observables for differential equations and dynamical systems. The motion of a projectile, subjected to gravity and air drag, is described by the following system of ODEs,
\begin{alignat}{2}
\label{eq:proj}
\frac{d}{dt} \mathbf{x}(t;y) &= \mathbf{v}(t;y), && \mathbf{x}(0;y)=\mathbf{x}_0(y), \\
\frac{d}{dt} \mathbf{v}(t;y) &= -F_D(\mathbf{v}(t;y);y)\frac{\mathbf{v}}{|\mathbf{v}|_2} - g\mathbf{e}_2, \qquad && \mathbf{v}(0;y) = \mathbf{v}_0(y),
\end{alignat}
where $F_D = \frac{1}{2m}\rho C_d \pi r^2 \|v\|^2$ denotes the drag force, the initial velocity is set to $\mathbf{v}(0;y) =[v_0 \cos(\alpha), v_0 \sin(\alpha)]$ and the  various parameters are listed in Table \ref{tab:projectile_motion_parameters}.

The observable $\map$ of interest is the \emph{horizontal range} $x_{\max}$ of the simulation:
\begin{equation*}
    x_{\max}(y) = x_1(y,t_f), \qquad \text{with } t_f = x_2^{-1}(0).
\end{equation*}
Here, $y = [\alpha,v_0]$ denotes the vector of \emph{control parameters}. The aim of this \emph{model problem} is to find initial release angle $\alpha \in [0, \pi/2]$ and initial velocity $v_0 \in [10, 30]$ such that the horizontal range $x_{max}$ is exactly $x_{ref} = 15$. Thus, we have an optimal control problem, constrained by the ODE \eqref{eq:proj}, with control parameters $y \in [0,1]^2$. The underlying objective function is given by,
\begin{equation}
    \label{eq:objproj}
    \gl(y) = \frac{1}{2}|x_{max}(y) - x_{ref}|^2.
\end{equation}
The advantage of this model problem is that the optimal control parameters, i.e.,  minimizers of \eqref{eq:objproj}, can be easily computed through cheap numerical experiments, 
which corresponds to points, lying on a parabola-like curve in the $Y$-space, shown in figure \ref{fig:proj1} (Left). Moreover, one can cheaply generate training data by solving the ODE system \eqref{eq:proj} with a standard forward Euler time discretization. 
\begin{figure}[htbp]
\centering
\includegraphics[width=8cm]{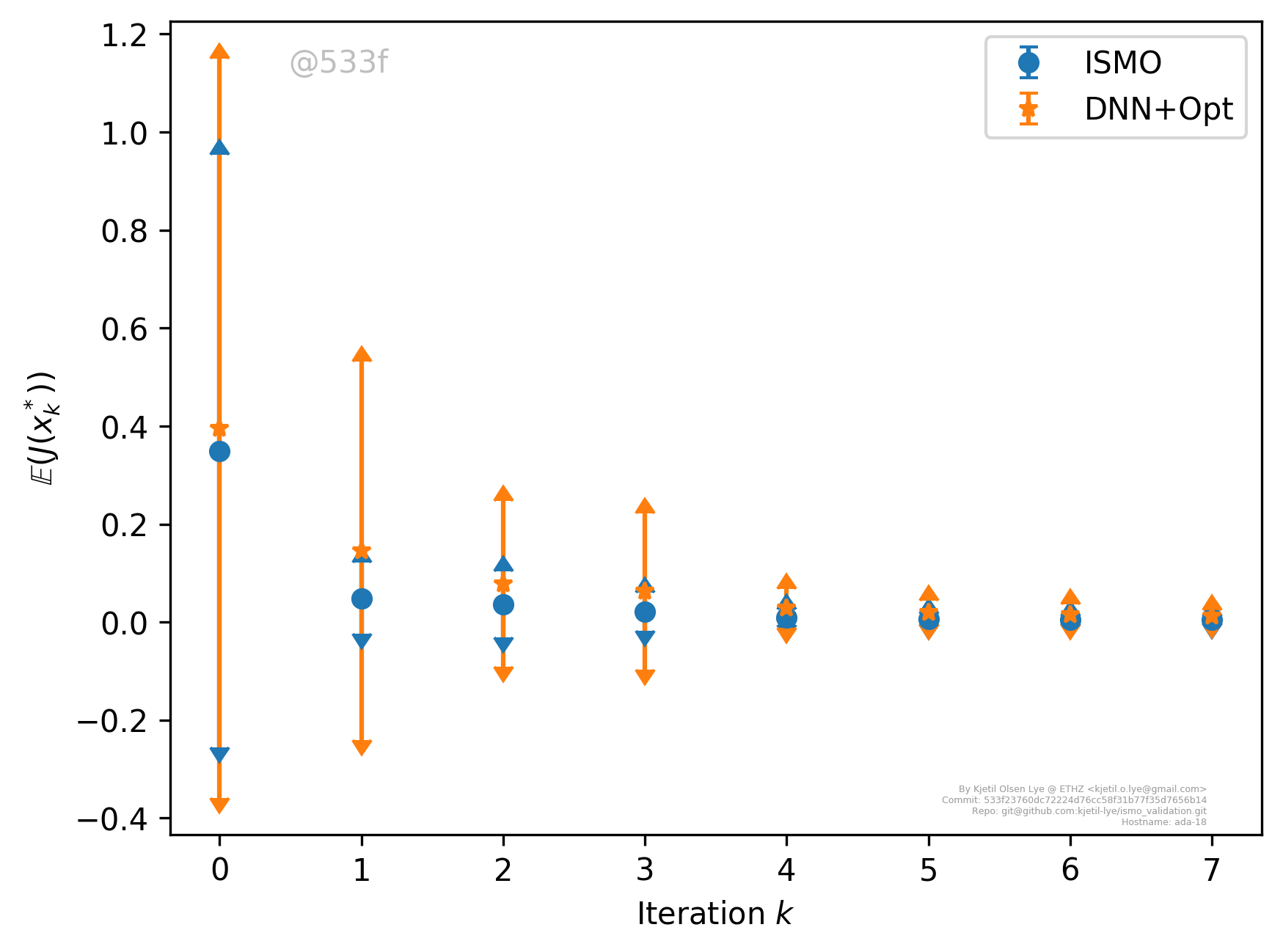}
\caption{The objective function vs. number of iterations  of the DNNopt and ISMO algorithms for the optimal control of projectile motion. The mean of the objective function and mean $\pm$ standard deviation \eqref{eq:std} are shown.}
\label{fig:proj2}
\end{figure}

We run the DNNopt algorithm \ref{alg:dnnopt} on this problem with hyperparameters  listed in Table~\ref{tab:parameters} and plot the mean $\bar{\gl}$ \eqref{eq:std} of the cost function \eqref{eq:objproj}, as well as the mean $\pm$ standard deviation \eqref{eq:std} (as a measure of the robustness/sensitivity) of the algorithm in figure \ref{fig:proj2}. For this figure, at iteration $k$, the size of the training set $\train$ in algorithm \ref{alg:dnnopt} is given by $N_k = N_0 + k\Delta N$, with $N_0 = 32$ and $\Delta N = 16$. From figure \ref{fig:proj2}, we see that the mean of the cost function, computed by the DNNopt algorithm converges to the underlying minimum $\gl = 0$, with increasing number of training samples. However, the standard deviation \eqref{eq:std} decays at a slow rate. In particular, the standard deviation is still high, for instance when $k=3$ or $k=4$. This slow decay is consistent with an algebraic rate of convergence (with respect to the number of iterations) as predicted by theory in \eqref{eq:dnbd1}, \eqref{eq:dnk1} implying that the optimizers, computed by the DNNopt algorithm \ref{alg:dnnopt} may not be robust. Note that the mean $-$ standard deviation for the objective function, might be negative on account of non-zero standard deviation near the minimum (zero). 
\begin{figure}[h!]
    \begin{subfigure}{.48\textwidth}
        \centering
        \includegraphics[width=1\linewidth]{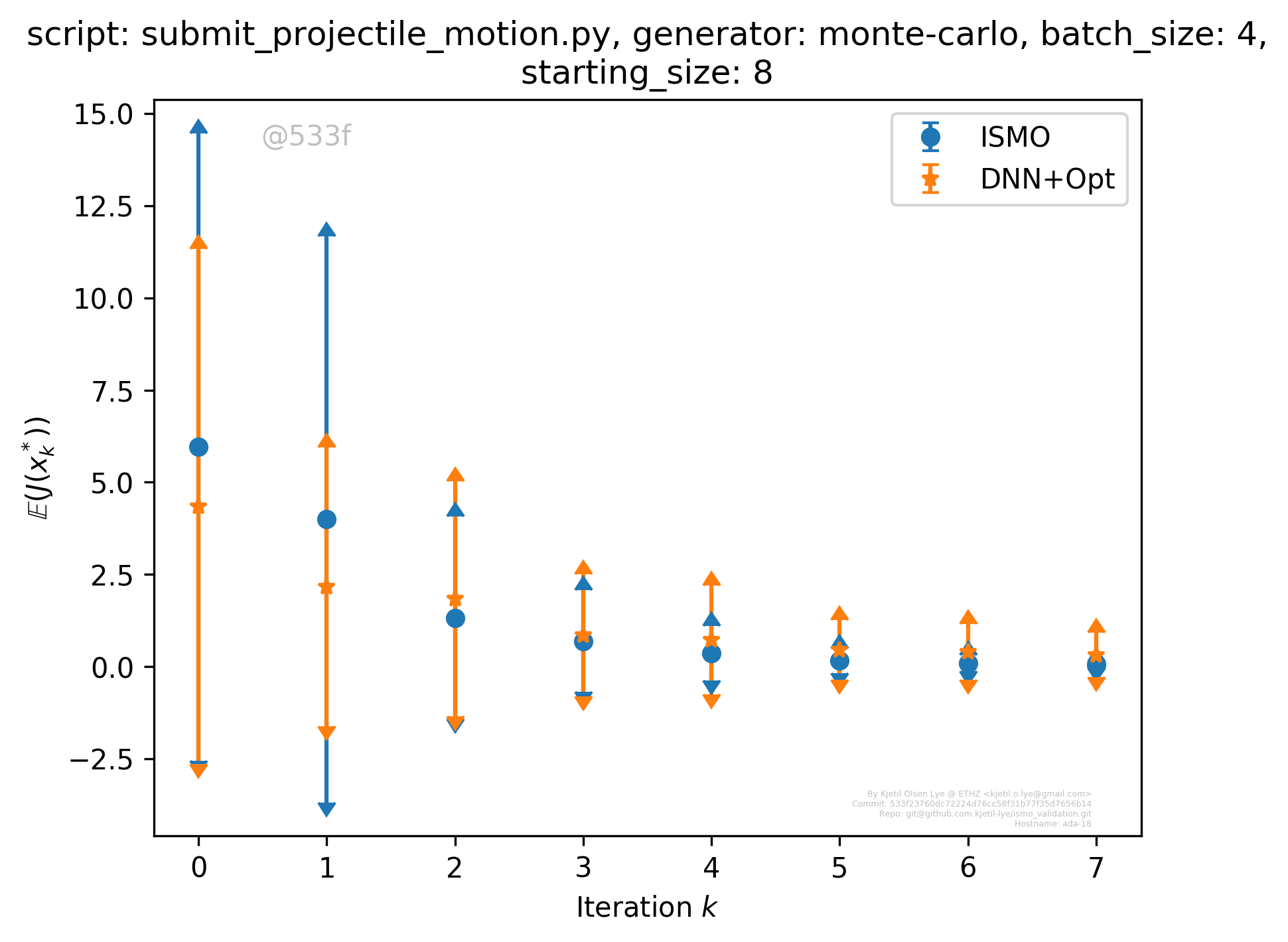}
    \end{subfigure}
    \begin{subfigure}{.48\textwidth}
        \centering\
        \includegraphics[width=1\linewidth]{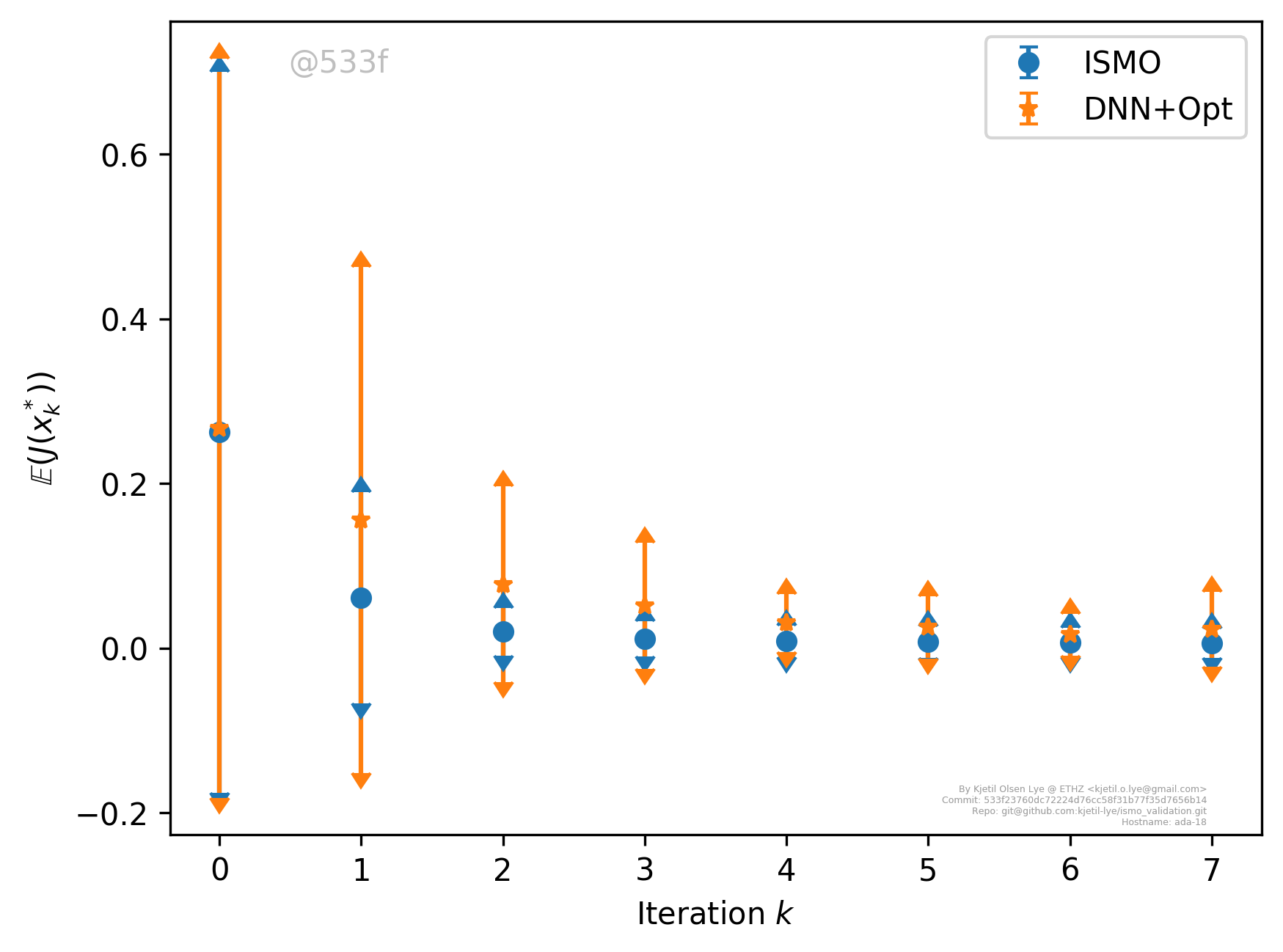}
    \end{subfigure}
    \caption{The objective function vs. number of iterations  of the DNNopt and ISMO algorithms for the optimal control of projectile motion. The mean of the objective function and mean $\pm$ standard deviation \eqref{eq:std} are shown. Left: Random initial training points with $N_0=8$ and $\Delta N =4$ for the ISMO algorithm. Right: Sobol initial training points for the ISMO algorithm with $N_0=32$ and $\Delta N = 16$.}
    \label{fig:proj3}
    \end{figure}

We also run the ISMO algorithm \ref{alg:ismo} for this problem, with initial $N_0=32$ training samples and a batch size of $\Delta N = 16$ in algorithm \ref{alg:ismo}. The resulting mean (and mean $\pm$ standard deviation) of the cost function are plotted in figure \ref{fig:proj2}. From this figure, we see that the mean of the cost function quickly converges to $0$. Moreover, and in contrast to the DNNopt algorithm, the standard deviation with ISMO, converges very rapidly to zero, indicating consistency with the exponential convergence, predicted by the theory in  \eqref{eq:isbd}, \eqref{eq:isk}. This example also brings out a key advantage with ISMO, namely although the mean of the objective function is only slightly lower with ISMO than DNNopt in this example, the standard deviation is much lower. Hence, ISMO, at the same cost, is significantly more robust, than the DNNopt algorithm. 

Why is ISMO more robust than DNNopt in this case ?. As explained in the theory for ISMO, the reason lies in the iterative feedback between the learner (supervised learning algorithm \ref{alg:DL}) that queries the teacher (standard optimization algorithm \ref{alg:qn}) for augmenting training points, that are more clustered around the underlying minima, in each successive iteration. This is illustrated in figure \ref{fig:proj1}, where we plot the training set $\train_k$ and the additional training points $\bar{\train}_k$ at different iterations $k$ for the ISMO algorithm. As observed in this figure, we start with a randomly chosen set of points, which have no information on the minimizers of the underlying function (the parabola in figure \ref{fig:proj1}). At the very first iteration, the standard optimization algorithm identifies optimizers that are clustered near the parabola and at each successive step, more such training points are identified, which increasingly improve the accuracy and robustness of the ISMO algorithm. 

We recall from the theory presented in Lemma~\ref{lem:2} that the initial number of training points $N_0$ in the ISMO algorithm should be sufficiently high in order to enforce \eqref{eq:l21} and obtain the desired rate of convergence. To check whether a minimum number of initial training points are necessary for the ISMO algorithm to be robust and accurate, we set $N_0 = 8$ and $\Delta N = 4$ in algorithm \ref{alg:ismo} and present the results in figure \ref{fig:proj3} (left). As seen from this figure and contrasting the results with those presented in figure \ref{fig:proj2}, we observe that the ISMO algorithm is actually \emph{less robust} than the DNNopt algorithm, at least for the first few iterations. Hence, we clearly need enough initial training samples for the ISMO algorithm to be robust and efficient. 

Finally, the results presented so far used randomly chosen initial training points for the ISMO algorithm. The results with low-discrepancy Sobol points are presented in figure \ref{fig:proj3} (right) and show the same qualitative behaviour as those with randomly chosen points. Although the mean and standard deviation of the objective function are slightly smaller with the Sobol points, the differences are minor and we will only use randomly chosen initial training points for the rest of the paper. 
\begin{figure}[h!]
\centering
\includegraphics[width=8cm]{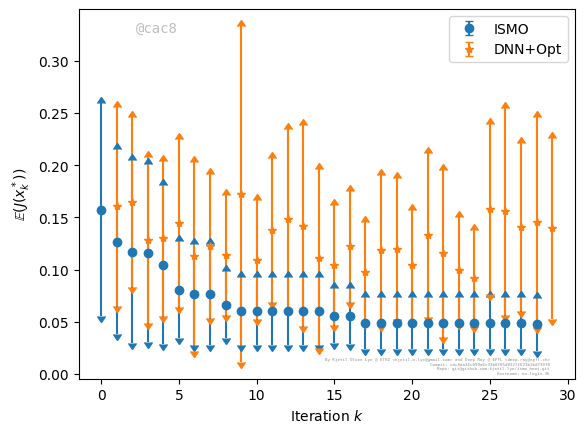}
\caption{The objective function \eqref{eq:htct} vs. number of iterations  of the DNNopt and ISMO algorithms for the parameter identification (data assimilation) problem for the heat equation. The mean of the objective function and mean $\pm$ standard deviation \eqref{eq:std} are shown.}
\label{fig:heat1}
\end{figure}

This numerical experiment is openly avaiable at \url{https://github.com/kjetil-lye/ismo_validation}.
\subsection{Inverse problem for the heat equation}
In this section, we consider our first PDE example, the well-known heat equation in one space dimension, given by,
\begin{equation}
    \label{eq:ht}
    \begin{aligned}
    u_t &= qu_{xx}, \quad (x,t) \in (0,1)\times(0,T), \\
    u(x,0) &= \bar{u}(x), \quad x \in (0,1), \\
    u(0,t) &= u(1,t) \equiv 0, \quad t \in (0,T). 
    \end{aligned}
\end{equation}
Here, $u$ is the temperature and $q \ge 0$ is the diffusion coefficient. We will follow \cite{BV1} and identify parameters for the following \emph{data assimilation problem}, i.e.,  for an initial data of the form,
\begin{equation}
    \label{eq:htin}
    \bar{u}(x) = \sum\limits_{\ell=1}^L a_{\ell}\sin(\ell \pi x),
\end{equation}
with unknown coefficients $a_{\ell}$ and for an unknown diffusion coefficient $q$, the aim of this data assimilation or parameter identification problem is to compute approximations to $a_\ell,q$ from measurements of the form $u_i = u(x_i,T)$, i.e., pointwise measurements of the solution field at the final time. 

This problem can readily be cast in the framework of PDE constrained optimization, section \ref{sec:2}, in the following manner. The control parameters are $y = [q,a_1,a_2,\ldots,a_L] \in Y = [0,1]^{L+1}$ and the relevant observables are,
\begin{equation}
    \label{eq:htobs}
    \map_i(y) = u(x_i,T), \quad 1 \leq i \leq I.
\end{equation}
Following \cite{BV1} and references therein, we can set up the following cost function,
\begin{equation}
    \label{eq:htct}
    \gl(y):= \frac{1}{2I} \sum\limits_{i=1}^I |\map_i(y) - \bar{\map}_i |^2,
\end{equation}
with measurements $\bar{\map}_i$. In this article, we assume that the measurements are noise free. 

In order to perform this experiment, we set $L = 9$, $I=5$ and $T=0.001$ and generate the measurement data from the following \emph{ground truth} parameters,
$$
[q,a_1,\ldots a_9] = [0.75,0.1,0.4,0.8,0.25,0.75,0.45,0.9,0.25,0.85],
$$
and measurement points,
$$
[x_1,\ldots,x_5] = [0.125, 0.25, 0.5, 0.75, 0.825]
$$
\begin{figure}[htbp]
\centering
\includegraphics[width=8cm]{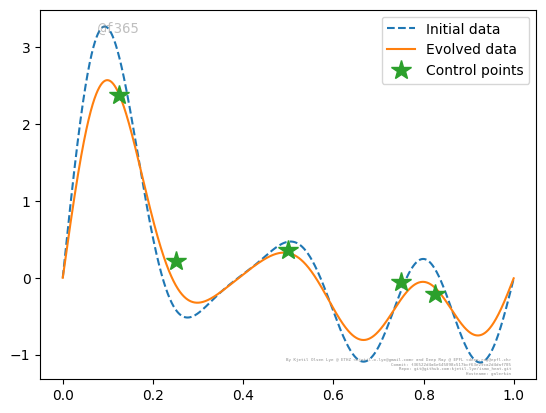}
\caption{Solution of the data assimilation problem for the heat equation with the ISMO algorithm. The identified initial data \eqref{eq:htin}, resulting (exact) solution at final time and measurements \eqref{eq:htobs} are shown. }
\label{fig:heat2}
\end{figure}
We numerically approximate the heat equation \eqref{eq:ht} with a second-order Crank-Nicholson finite difference scheme with $2048$ points in both space and time and generate the measurement data $\bar{\map}_i$, for $1 \leq i \leq 5$ from this simulation. 

The DNNopt algorithm \ref{alg:dnnopt} is run for this particular configuration with the hyperparameters in Table~\ref{tab:parameters}. We would like to point out that $I$ deep neural networks, $\map^{\ast}_i$, for $1 \leq i \leq I$, are independently trained, corresponding to each of the observables $\map_i$. The algorithm \ref{alg:dnnopt} and its analysis in section \ref{sec:an1}, can be readily extended to this case.  

Results of the DNNopt algorithm with randomly chosen $N_k$ training points, with $N_k = N_0 + k\Delta N$, with $N_0 = 64$ and $\Delta N = 16$, are presented in figure \ref{fig:heat1}. We observe from this figure that the mean $\bar{\gl}$ of the cost function \eqref{eq:std} barely decays with increasing number of training samples. Moreover, the standard deviation \eqref{eq:std} is also unacceptably high, even for a very high number of training samples. This is consistent with the bounds \eqref{eq:dn1}, \eqref{eq:dnk}, even though the hypotheses of section \ref{sec:an1} do not necessarily hold in this case. The bounds \eqref{eq:dn1}, \eqref{eq:dnk} suggest a very slow decay of the errors for this 10-dimensional ($d=10$) optimization problem.

Next, the ISMO algorithm \ref{alg:ismo} is run on this problem with the same setup as for the DNNopt algorithm, i.e., $I=5$ independent neural networks $\map^{\ast}_{k,i}$ are trained, each corresponding to the observable $\map_i$ \eqref{eq:htobs}, at every iteration $k$ of the ISMO algorithm. The results, with $N_0=64$ and $\Delta N = 16$ as hyperparameters of the ISMO algorithm are shown in figure \ref{fig:heat1}. We observe from this figure that the mean $\bar{\gl}$ of the cost function \eqref{eq:std} decays with increasing number of iterations, till a local minimum corresponding to a mean objective function value of approximately $0.05$ is reached. The standard deviation also decays as the number of iterations increases. In particular, the ISMO algorithm provides a significantly lower mean value of the cost function compared to the DNNopt algorithm. Moreover, the standard deviation is several times lower for the ISMO algorithm than for the DNNopt algorithm. Thus, we expect a robust parameter identification with the ISMO algorithm. This is indeed verified from figure \ref{fig:heat2}, where we plot the identified initial condition \eqref{eq:htin} with the ISMO algorithm with a starting size of 64 and a batch size of 16, and the corresponding solution field at time $T$ (computed with the Crank-Nicholson scheme), superposed with the measurements. We see from this figure that the identified initial data leads to one of the multiple solutions of this ill-posed inverse problem that approximates the measured values quite accurately.

This numerical experiment is openly avaiable at \url{https://github.com/kjetil-lye/ismo_heat}.
\begin{figure}[htbp]
    \begin{subfigure}{.48\textwidth}
        \centering
        \includegraphics[width=1\linewidth]{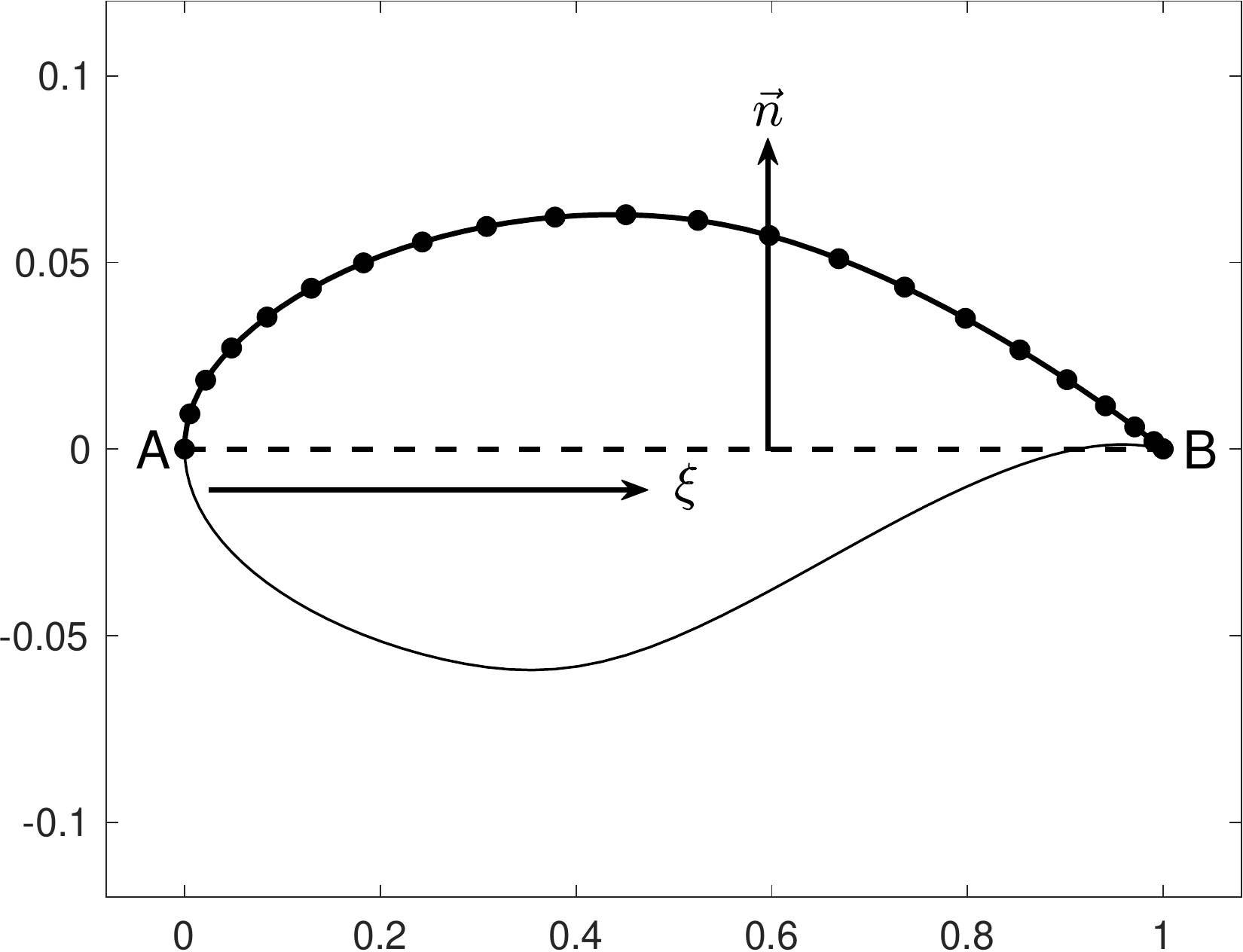}
    \end{subfigure}
    \begin{subfigure}{.48\textwidth}
        \centering\
        \includegraphics[width=1\linewidth]{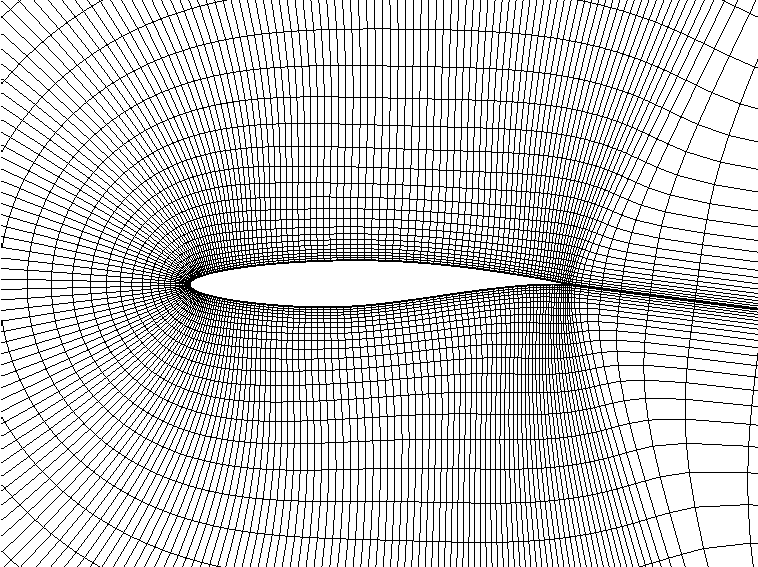}
    \end{subfigure}
    \caption{The reference shape of the RAE2822 airfoil. Left: The parametrization used to deform the shape of the airfoil according to \eqref{eq:foil_deform}. Right: The initial C-grid mesh used around the airfoil.}
    \label{fig:ref_foil}
    \end{figure}
\subsection{Shape optimization of airfoils}
For the final numerical example in this paper, we choose optimization of the shapes of airfoils (wing cross-sections) in order to improve certain aerodynamic properties. The flow past the airfoil is modeled by the two-dimensional Euler equations, given by,
\begin{equation}
\label{eq:euler2D}
\U_t + \F^x(\U)_x + \F^y(\U)_y = 0,  \quad
\U = \begin{pmatrix}
\rho \\
\rho u \\
\rho v\\
E
\end{pmatrix}, \quad \F^x(\U) = \begin{pmatrix}
\rho u \\
\rho v^2 + p \\
\rho uv \\
(E + p) u
\end{pmatrix}, \quad \F^y(\U) = \begin{pmatrix}
\rho v \\
\rho uv \\
\rho v^2 + p \\
(E + p) v
\end{pmatrix}
\end{equation}
where $\rho, \vel = (u,v)$ and $p$ denote the fluid density, velocity and pressure, respectively. The quantity $E$ represents the total energy per unit volume
\[
E = \frac{1}{2} \rho |\vel|^2 + \frac{p}{\gamma -1},
\]
where $\gamma=c_p/c_v$ is the ratio of specific heats, chosen as $\gamma=1.4$ for our simulations. Additional important variables associated with the flow include the speed of sound $a = \sqrt{\gamma p/\rho}$ and the Mach number $M=|\vel|/a$.
\begin{figure}[htbp]
    \begin{subfigure}{.48\textwidth}
        \centering
        \includegraphics[width=1\linewidth]{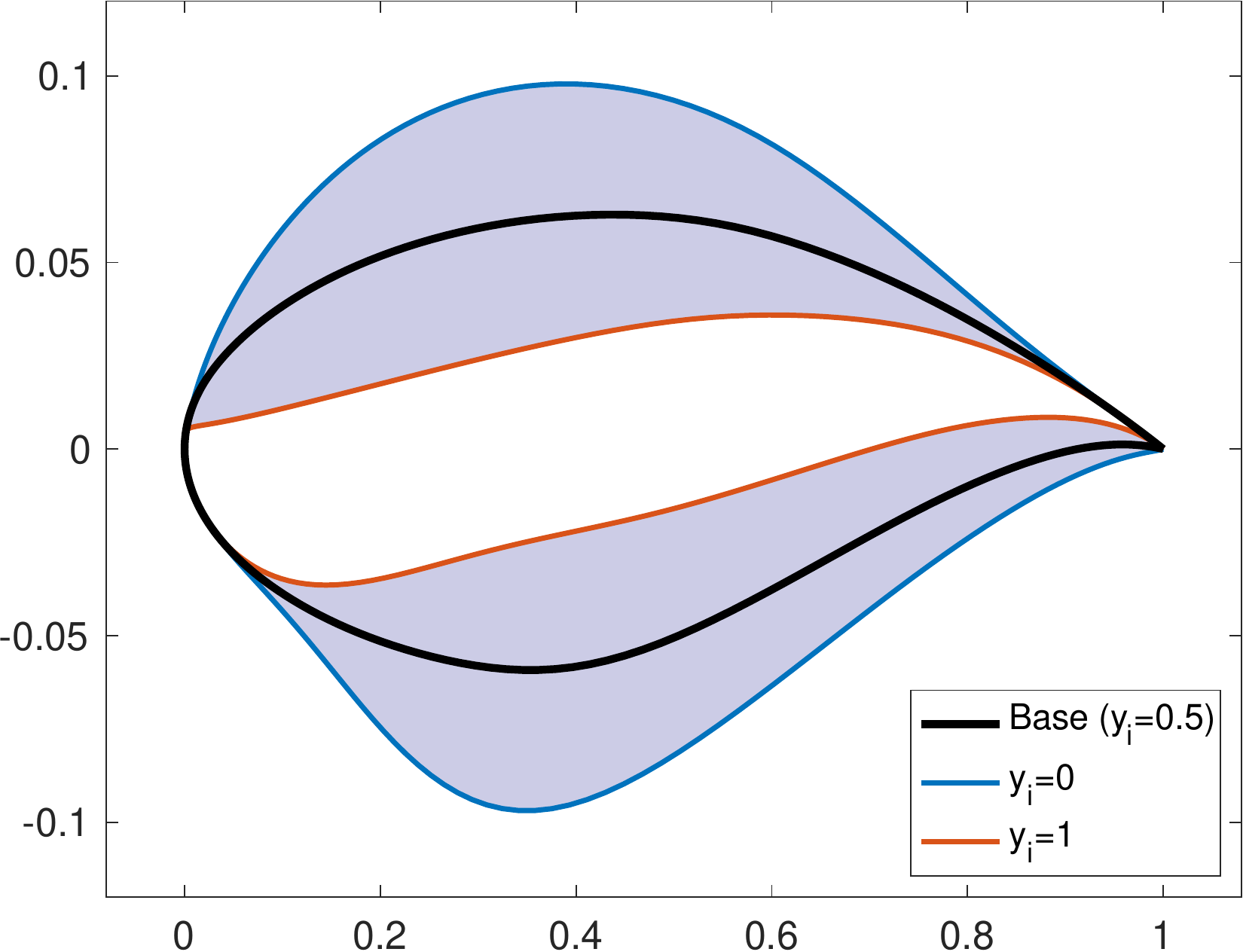}
    \end{subfigure}
    \begin{subfigure}{.48\textwidth}
        \centering\
        \includegraphics[width=1\linewidth]{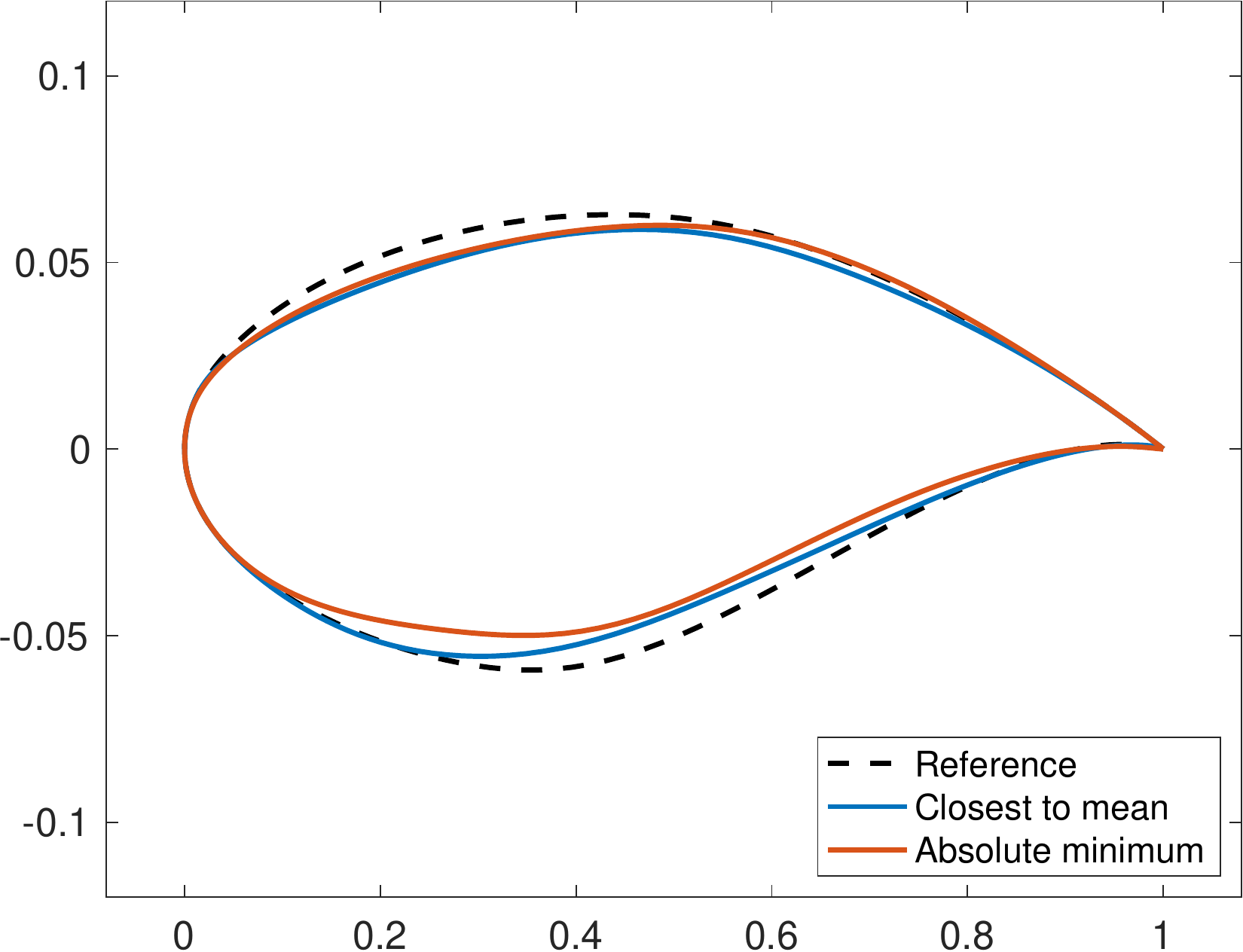}
    \end{subfigure}
    \caption{Shapes for the airfoil shape optimization problem. Left: The design envelope, where the reference, widest and narrowest airfoils are obtained by setting all design parameters $y_i$ as 0.5, 0 and 1, respectively. Right: The optimized shapes obtained using the ISMO algorithm.}
    \label{fig:afl1}
    \end{figure}    

We will follow standard practice in aerodynamic shape optimization~\cite{Mohammadi2009} and consider a reference airfoil with upper and lower surface of the airfoil are located at $(x,y^{U,ref}(x/c))$ and $(x,y^{L,ref}(x/c))$, with $x \in [0,c]$, $c $ being the airfoil chord, and $y^{U,ref},y^{L,ref}$ corresponding to the well-known RAE2822 airfoil \cite{UMRIDA}, see figure \ref{fig:ref_foil} for a graphical representation of the reference airfoil.

The shape of this reference airfoil is perturbed in order to optimize aerodynamic properties. The upper and lower surface of the airfoil are perturbed as 
\begin{equation}
\label{eq:foil_deform}
    y = y^{ref}(\xi) + h(\xi), \quad h(\xi) = \sum_{k=1}^{d/2} a_k B_k(\xi), \qquad \xi = x/c,
\end{equation}
with $c$ being the airfoil chord length i.e., $c = |AB|$ for the airfoil depicted in figure \ref{fig:ref_foil}.



Although several different parametrizations of shape or its  perturbations~\cite{Samareh2001,HH1} are proposed in the large body of aerodynamic shape optimization literature, we will focus on the simple and commonly used \emph{Hicks-Henne} bump functions \cite{HH1} to parametrize the perturbed airfoil shape
\begin{equation}
    \label{eq:hh_func}
    B_k(\xi) = \sin^3(\pi \xi^{q_k}), \quad q_k = \frac{\ln{2}}{ \ln(d+4) - \ln{k}}.
\end{equation}
Furthermore, the design parameters for each surface is chosen as
\begin{equation}\label{eq:deform_coef}
    \begin{aligned}
    a^{U}_k &= 2 (y^{U}_k - 0.5) \left( \frac{d}{2} - k + 1\right)\times 10^{-3}, \quad y^{U}_k \in [0,1],\\
    a^{L}_k &= 2 (y^L_k - 0.5) \left(k + 1\right)\times 10^{-3}, \quad y^L_k \in [0,1],
    \end{aligned}
\end{equation}
for $1 \leq k \leq d/2$. The transform functions in \eqref{eq:deform_coef} to obtain the design parameters have been chosen to ensure i) the perturbation is smaller near narrower tip of the airfoil (point B), and ii) the design envelope is large enough (see Figure \ref{fig:afl1}) while avoiding non-physical intersections of the top and bottom surfaces after deformation. Note that the airfoil shape is completely parametrized by the vector $y \in [0,1]^d$. For our experiments, we choose 10 parameters for each surface, i.e., we set $d=20$.

The flow around the airfoil is characterized by the free-stream boundary conditions corresponding to a Mach number of $M^\infty = 0.729$ and angle of attack of $\alpha = 2.31^{\circ}$. The observables of interest are the lift and drag coefficients given by, 
\begin{align}
\label{eq:lift}
    C_L(y) &= \frac{1}{K^\infty(y)} \int_{S} p(y)n(y) \cdot \hat{y} ds, \\
    \label{eq:drag}
    C_D(y) &= \frac{1}{K^\infty(y)} \int_{S} p(y)n(y) \cdot \hat{x} ds,
\end{align}
where $K^\infty(y) = \rho^\infty(y)\|\mathbf{u}^\infty(y)\|^2/2$ is the free-stream kinetic energy with $\hat{y} = [-\sin(\alpha),\cos(\alpha)]$ and $\hat{x} = [\cos(\alpha),\sin(\alpha)]$. 

\begin{figure}[htbp]
\centering
\includegraphics[width=8cm]{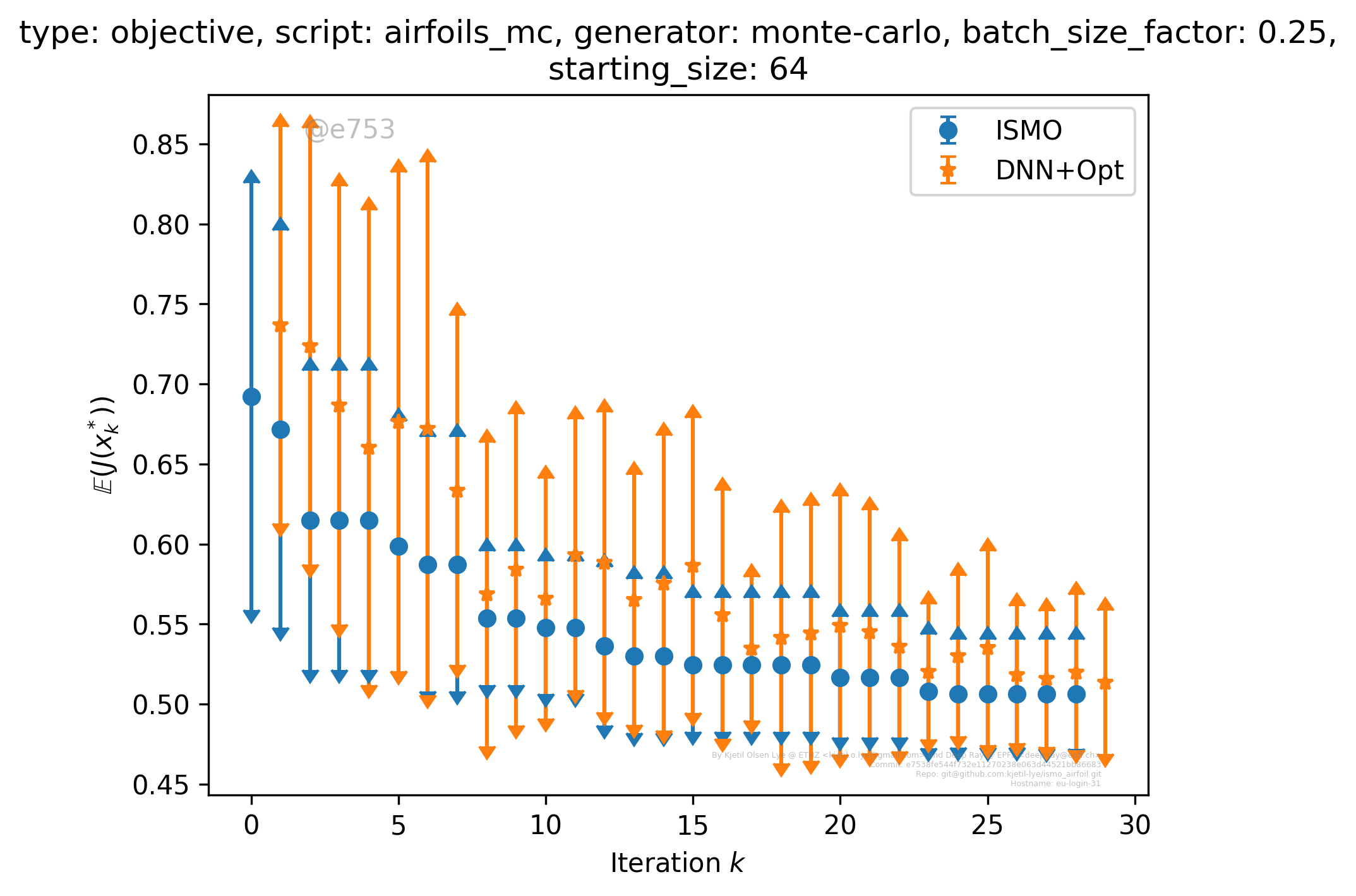}
\caption{The objective function \eqref{eq:aflct} vs. number of iterations  of the DNNopt and ISMO algorithms for the airfoil shape optimization problem. The mean of the objective function and mean $\pm$ standard deviation \eqref{eq:std} are shown.}
\label{fig:afl2}
\end{figure}

As is standard in aerodynamic shape optimization~\cite{Reuther1995,Mohammadi2009,Lyu2015}, we will modulate airfoil shape in order to \emph{minimize drag while keeping lift constant}. This leads to the following optimization problem,
\begin{equation}
    \label{eq:aflct}
    \gl(y) =  \frac{C_D(y)}{C_D^{ref}} + P\max\left(0, 0.999-\frac{C_L}{C_L^{ref}}\right)
\end{equation}
with $C_{L,D}$ being the lift \eqref{eq:lift} and drag \eqref{eq:drag} coefficients, respectively, and $C_L^{ref}$ is the lift corresponding to the reference RAE2822 airfoil and $P = 10000$ is a parameter to penalize any deviations from the lift of the reference airfoil. The reference lift is calculated to be $C_L^{ref} = 0.8763$, and the reference drag is $C_D^{ref}=0.011562$.

We will compute approximate minimizers of the cost function \eqref{eq:aflct} with the DNNopt algorithm \ref{alg:dnnopt} and the ISMO algorithm \ref{alg:ismo}. The training data is generated using the NUWTUN solver\footnote{http://bitbucket.org/cpraveen/nuwtun} to approximate solutions of the Euler equations \eqref{eq:euler2D} around the deformed airfoil geometry \eqref{eq:foil_deform}, with afore-mentioned boundary conditions. The NUWTUN code is based on the ISAAC code\footnote{http://isaac-cfd.sourceforge.net}~\cite{Morrison1992} and employs a finite volume solver using the Roe numerical flux and second-order MUSCL reconstruction with the Hemker-Koren limiter. Once the airfoil shape is perturbed, the base mesh (see Figure \ref{fig:ref_foil}) is deformed to fit the new geometry using a thin plate splines based radial basis function interpolation technique~\cite{Kumar2007,Duvigneau2011}, which is known to generate high quality grids. The problem is solved to steady state using an implicit Euler time integration. 

First, we run the DNNopt algorithm \ref{alg:dnnopt}, with two independent neural networks $C_L^{\ast}$ and $C_D^{\ast}$ for the lift coefficient $C_L$ and drag coefficient $C_D$, respectively. The results of the DNNopt algorithm, with randomly chosen training points in $Y = [0,1]^{20}$, and with $N_0+k\Delta N$ training samples, for $N_0=64$ and $\Delta N = 16$, are presented in figure \ref{fig:afl2}. From this figure, we observe that the mean $\bar{\gl}$ of the cost function \eqref{eq:aflct} decays, but in an oscillatory manner, with increasing number of training samples. Moreover, the standard deviation \eqref{eq:std} also decays but is still rather high, even for a large number of iterations. This is consistent with the findings in the other two numerical experiments reported here. These results suggest that the DNNopt algorithm may not yield robust or accurate optimal shapes. 

Next, we run the ISMO algorithm \ref{alg:ismo} with the same setup as DNNopt, i.e.,  with two independent neural networks $C_{L,k}^{\ast}, C_{D,k}^{\ast}$, at every iteration $k$, for the lift and the drag, respectively. We set hyperparameters $N_0 = 64$ and batch size $\Delta N = 16$ and present results with the ISMO algorithm in figure \ref{fig:afl2}. We see from this figure that the mean and the standard deviation of the cost function steadily decay with increasing number of iterations for the ISMO algorithm. Moreover, the mean as well as the standard deviation of cost function, approximated by the ISMO algorithm, are significantly less than those by the DNNopt algorithm. Thus, one can expect that the ISMO algorithm provides parameters for robust shape optimization of the airfoil. This is further verified in figure \ref{fig:afl3}, where we separately plot the mean and mean $\pm$ standard deviation for the drag and lift coefficients, corresponding to the optimizers at the each iteration, for the DNNopt and ISMO algorithms. From this figure, we observe that the mean and standard deviation for the computed drag, with the ISMO algorithm, decay with increasing number of iterations. Moreover, both the mean and standard deviation of the drag are significantly lower than those computed with the DNNopt algorithm. On the other hand, both algorithms result in Hicks-Henne parameters that keep lift nearly constant (both in mean as well as in mean $\pm$ standard deviation), around a value of $C_L = 0.90$, which is slightly higher than the reference lift. This behavior is completely consistent with the structure of the cost function \eqref{eq:aflct}, which places the onus on minimizing the drag, while keeping lift nearly constant. 
\begin{figure}[htbp]
    \begin{subfigure}{.48\textwidth}
        \centering
        \includegraphics[width=1\linewidth]{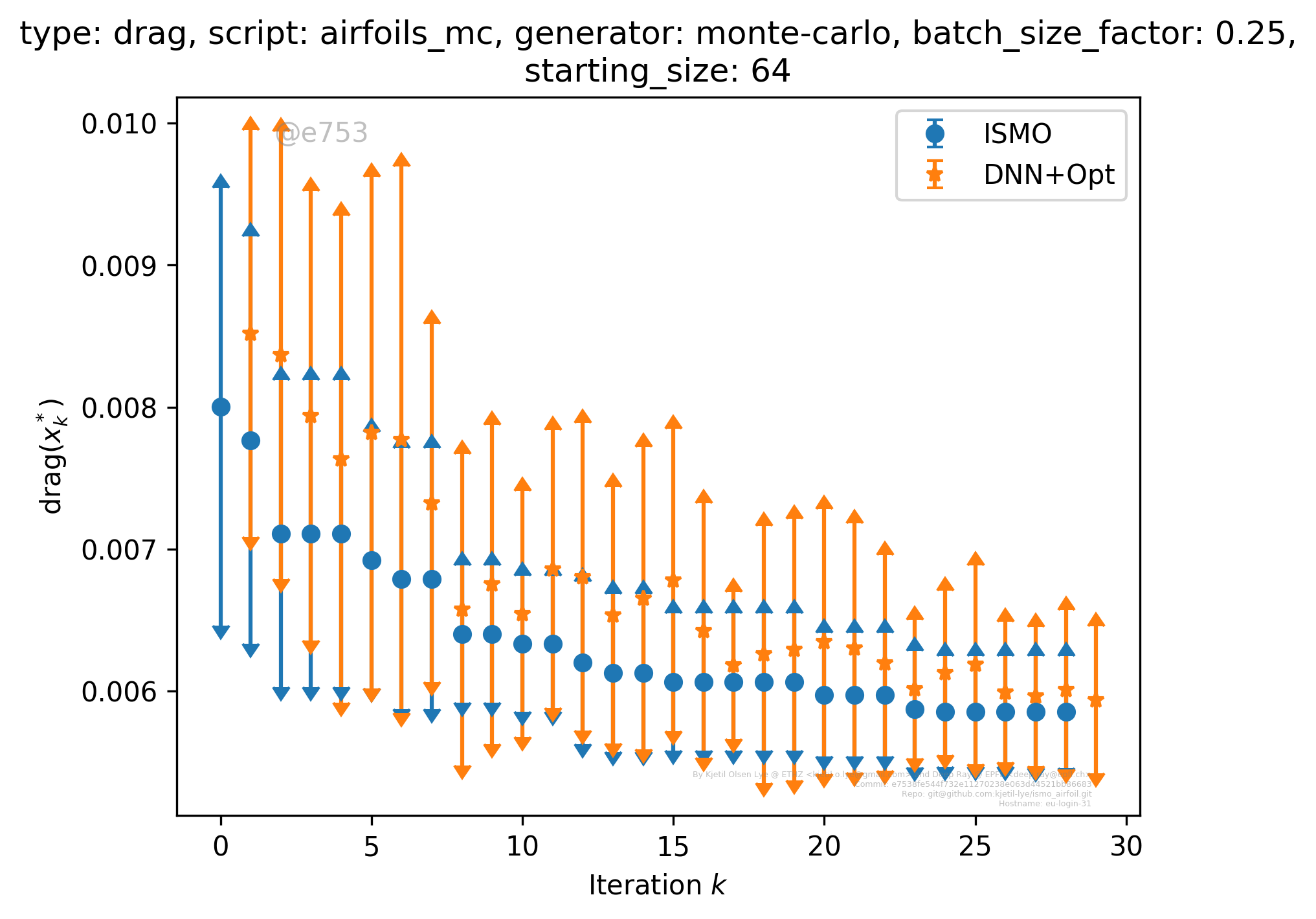}
    \end{subfigure}
    \begin{subfigure}{.48\textwidth}
        \centering\
        \includegraphics[width=1\linewidth]{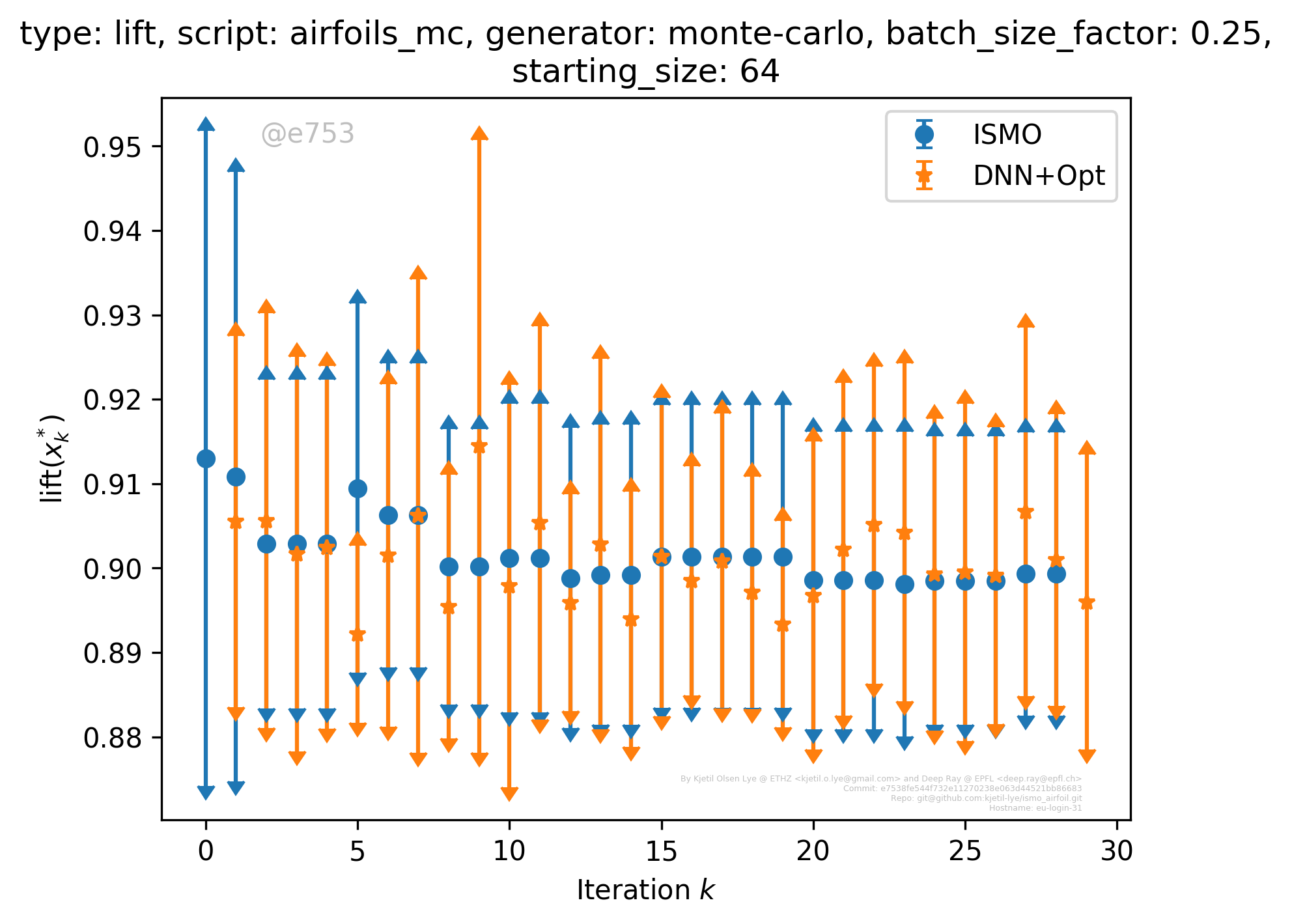}
    \end{subfigure}
    \caption{The Drag \eqref{eq:drag} (Left) and Lift \eqref{eq:lift} (Right) vs. number of iterations  of the DNNopt and ISMO algorithms for the airfoil shape optimization problem. The mean of the objective function and mean $\pm$ standard deviation \eqref{eq:std} are shown.}
    \label{fig:afl3}
    \end{figure}

Quantitatively, the ISMO algorithm converges to a set of parameters (see the resulting shape in figure \ref{fig:afl1}) that correspond to the computed minimum drag coefficient of $C_D = 0.005202$, when compared to the drag $C_D^{ref} = 0.01156$, of the reference RAE2822 airfoil. The corresponding lift coefficient with this optimizer is $C_L = 0.8960$, when compared to the reference lift $C_L^{ref} = 0.8763$. Similarly, the computed Hicks-Henne parameters, with the ISMO algorithm, that lead to a cost function \eqref{eq:aflct} that is closest to the mean cost function $\bar{\gl}$ (see shape of the resulting airfoil in figure \ref{fig:afl1}), resulted in a drag coefficient of $C_D = 0.005828$ and lift coefficient of $C_L = 0.8879$. Thus, while the optimized lift was kept very close to the reference lift, the drag was reduced by approximately $55\%$ (for the absolute optimizer) and $50\%$ (on an average) by the shape parameters, computed with the ISMO algorithm. 

In order to investigate the reasons behind this significant drag reduction, we plot the Mach number around the airfoil to visualize the flow, for the reference airfoil and two of the optimized airfoils (one corresponding to the absolute smallest drag and the other to the mean optimized drag) in figure \ref{fig:afl3}. From this figure we see that the key difference between the reference and optimized airfoil is significant reduction (even elimination) in the strength of the shock over the upper surface of the airfoil. This diminishing of shock strength reduces the shock drag and the overall cost function \eqref{eq:aflct}.

Finally, we compare the performance of the ISMO algorithm for this example with a standard optimization algorithm. To this end, we choose a black box truncated Newton (TNC) algorithm, with its default implementation in \emph{SciPy}~\cite{2020SciPy-NMeth}. In figure \ref{fig:afl5}, we plot the mean and the mean $\pm$ standard deviation of the cost function with the TNC algorithm, with 1024 starting values chosen randomly and compare it with the ISMO algorithm \ref{alg:ismo}. From figure \ref{fig:afl5} (left), we observe that the ISMO algorithm has a significantly lower mean of the objective function, than the black box TNC algorithm, even for a very large number ($2^{11}-2^{12}$) of evaluations (calls) to the underlying PDE solver. Clearly, the ISMO algorithm readily outperforms the TNC algorithm by more than an order of magnitude, with respect to the mean of the objective function \eqref{eq:aflct}. Moreover, we also observe from figure \ref{fig:afl5} (right) that the ISMO algorithm has significantly (several orders of magnitude) lower standard deviation than the TNC algorithm for the same number of calls to the PDE solver. Thus, this example clearly illustrates that ISMO algorithm is both more efficient as well as more robust than standard optimization algorithms such as the truncated Newton methods.    

This numerical experiment is openly avaiable at \url{https://github.com/kjetil-lye/ismo_airfoil}.

\section{Discussion}
\label{sec:6}
Robust and accurate numerical approximation of the solutions of PDE constrained optimization problems represents a formidable computational challenge as existing methods might require a large number of evaluations of the solution (and its gradients) for the underlying PDE. Since each call to the PDE solver is computationally costly, multiple calls can be prohibitively expensive.

\begin{figure}[htbp]
    \begin{subfigure}{.32\textwidth}
        \centering
        \includegraphics[width=1\linewidth]{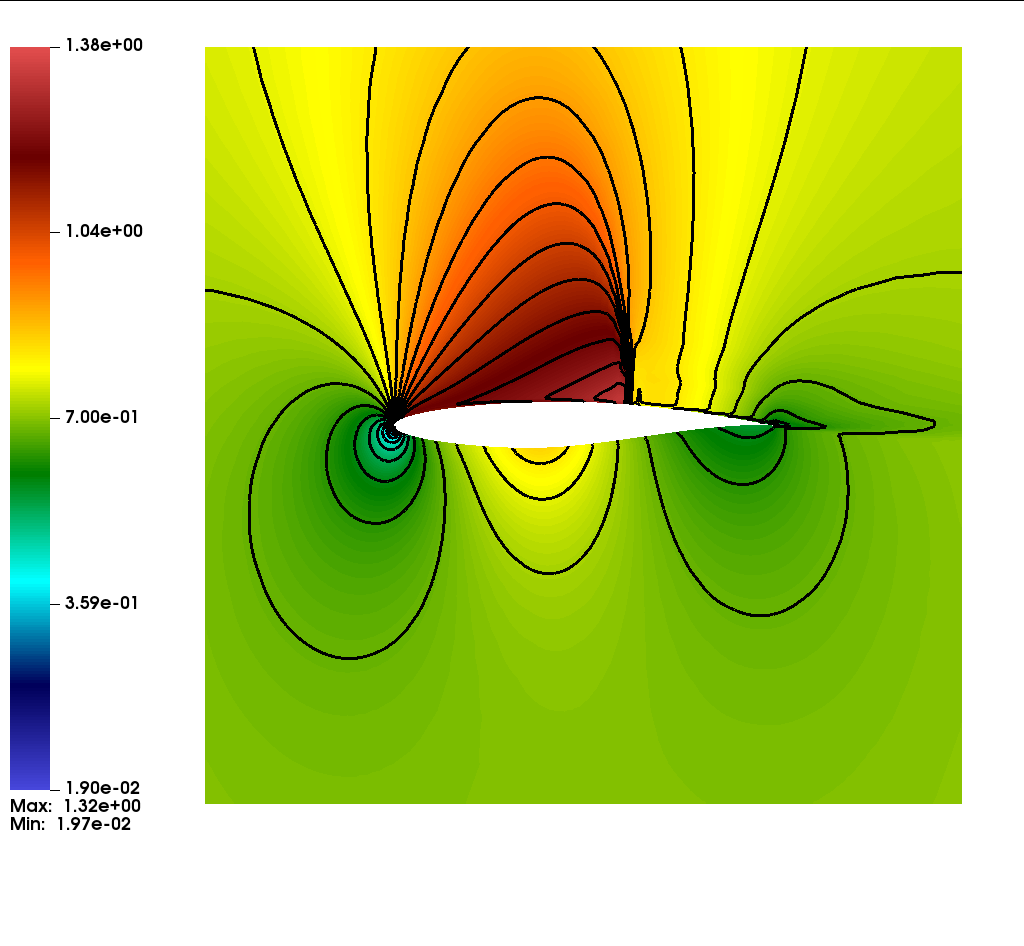}
    \end{subfigure}
    \begin{subfigure}{.32\textwidth}
        \centering\
        \includegraphics[width=1\linewidth]{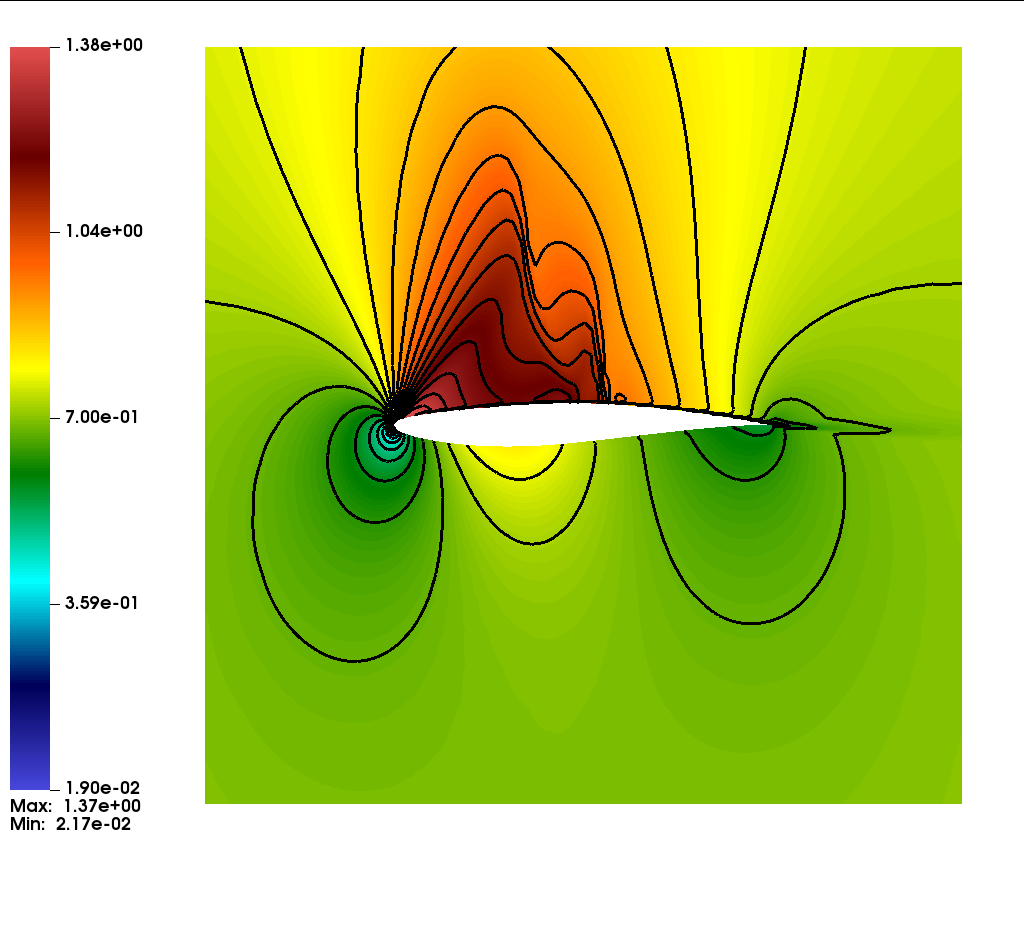}
    \end{subfigure}
    \begin{subfigure}{.32\textwidth}
        \centering\
        \includegraphics[width=1\linewidth]{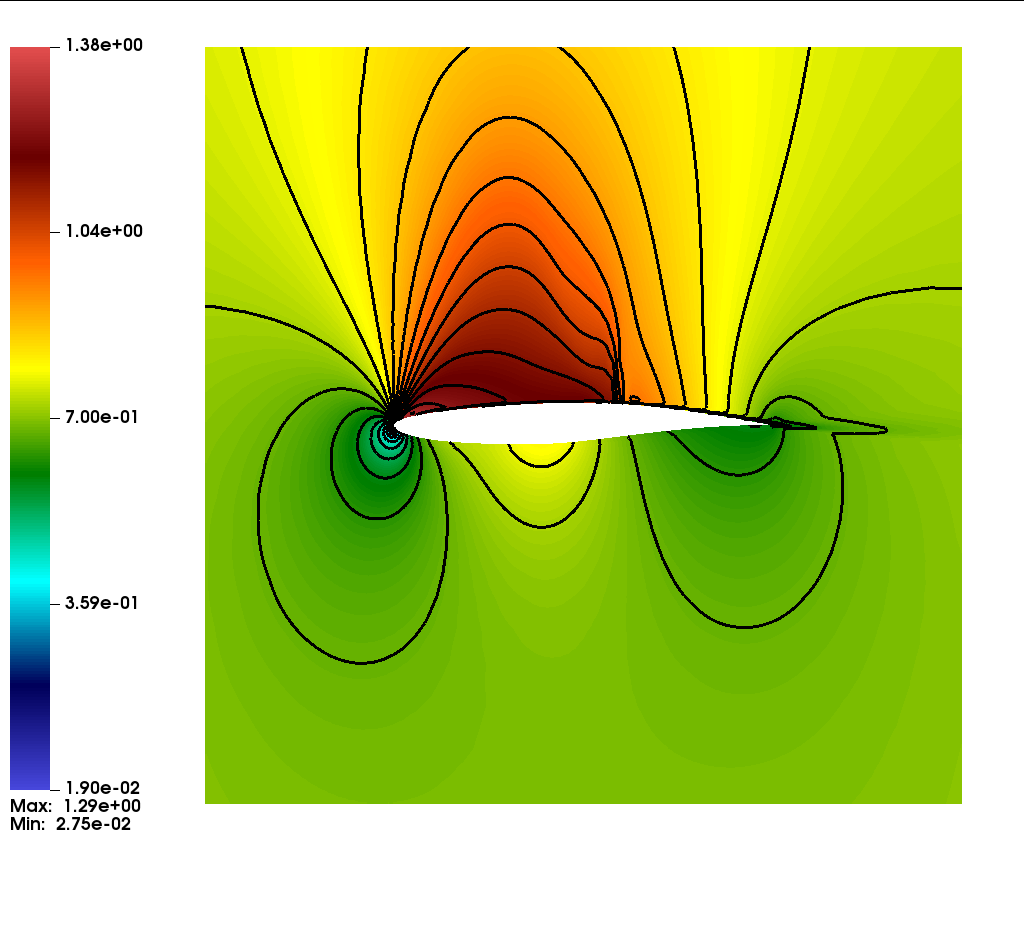}
    \end{subfigure}
    \caption{Mach number contours for the flow around airfoils. Left: Flow around reference RAE2822 airfoil. Center: Flow around optimized airfoil, corresponding to Hicks-Henne parameters with a cost function \eqref{eq:aflct} that is closest of the (converged) mean cost function for the ISMO algorithm. Right: Flow around optimized airfoil, corresponding to Hicks-Henne parameters with the smallest (converged) cost function for the ISMO algorithm}
    \label{fig:afl4}
    \end{figure}

Using surrogate models within the context of PDE constrained optimization is an attractive proposition. As long as the surrogate provides a robust and accurate approximation to the underlying PDE solution, while being much cheaper to evaluate computationally, surrogate models can be used inside standard optimization based algorithms, to significantly reduce the computational cost. However, finding surrogates with desirable properties can be challenging for high dimensional problems. 

Deep neural networks (DNNs) have emerged as efficient surrogates for PDEs, particularly for approximating \emph{observables} of PDEs, see \cite{LMR1,LMM1,MR1} and references therein. Thus, it is natural to investigate whether DNNs can serve as efficient surrogates in the context of PDE constrained optimization. To this end, we propose the DNNopt algorithm \ref{alg:dnnopt} that combines standard optimization algorithms, such as the quasi-Newton algorithm \ref{alg:qn}, with deep neural network surrogates. 

A careful theoretical analysis of the DNNopt algorithm, albeit in a restricted setting presented in section \ref{sec:an1}, reveals a fundamental issue with it, namely the accuracy and robustness (measured in terms of the range and standard deviation \eqref{eq:std} with respect to starting values) suffers from a significant \emph{curse of dimensionality}. In particular, estimates \eqref{eq:dnbd}, \eqref{eq:dnbd1}, \eqref{eq:dnk1} show that the error (and its standard deviation) with the DNNopt algorithm only decays very slowly with increasing number of training samples. This slow decay and high variance are verified in several numerical examples presented here.

We find that fixing training sets \emph{a priori} for the deep neural networks might lead to poor approximation of the underlying minima for the optimization problem, which correspond to a subset (manifold) of the parameter space. To alleviate this shortcoming of the DNNopt algorithm, we propose a novel algorithm termed as iterative surrogate model optimization (ISMO) algorithm \ref{alg:ismo}. The key idea behind ISMO is to iteratively augment the training set for a sequence of deep neural networks such that the underlying minima of the optimization problem are better approximated. At any stage of the ISMO iteration, the current state of the DNN is used as an input to the standard optimization algorithm to find (approximate) local minima for the underlying cost function. These local minima are added to the training set and DNNs for the next step are trained on the augmented set. This feedback algorithm is iterated till convergence. Thus, one can view ISMO as an \emph{active learning algorithm} \cite{AL}, where the learner (the deep neural network) queries the teacher or oracle (standard optimization algorithm \ref{alg:qn}) to provide a better training set at each iteration. 

\begin{figure}[htbp]
\centering
\begin{subfigure}{0.49\textwidth}
\includegraphics[width=\textwidth]{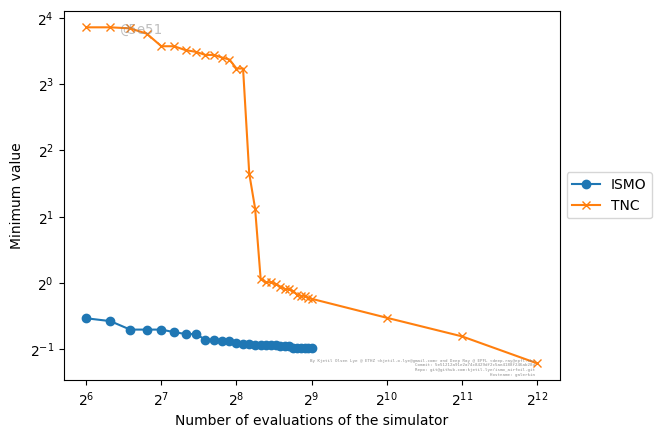}
\caption{Mean}
\end{subfigure}
\begin{subfigure}{0.49\textwidth}
\includegraphics[width=\textwidth]{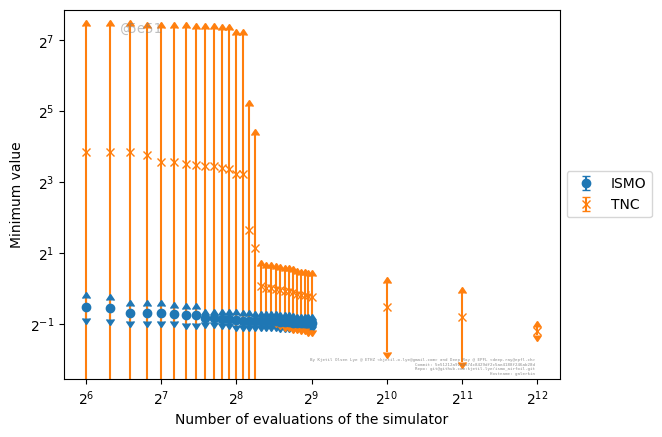}
\caption{Mean $\pm$ Standard Deviation}
\end{subfigure}
\caption{Comparison of ISMO with the black box TNC algorithm of \cite{2020SciPy-NMeth} for the airfoil shape optimization example. Left: Mean of the cost function \eqref{eq:aflct} vs. number of calls to the PDE solver. Right: Mean $\pm$ Standard deviation \eqref{eq:std} for the cost function \eqref{eq:aflct}}
\label{fig:afl5}
\end{figure}

In section \ref{sec:an2}, we analyze the ISMO algorithm in a restricted setting and prove that the resulting (approximate) optimizers converge to the underlying minimum of the optimization problem, \emph{exponentially} in terms of the number of iterations of the algorithm, see estimates \eqref{eq:isbd}. Moreover, the underlying variance is also reduced exponentially \eqref{eq:isbd1}. This exponential convergence \eqref{eq:isk1} should be contrasted with the algebraic convergence for the DNNopt algorithm \eqref{eq:dnk1} and can ameliorate the curse of dimensionality.

Although the theoretical results were proved in a restricted setting, we validated the ISMO algorithm and compared it with the DNNopt algorithm for three representative numerical examples, namely for an optimal control problem for a nonlinear ODE, a data assimilation (parameter identification) inverse problem for the heat equation and shape optimization of airfoils, subject to the Euler equations of compressible fluid dynamics. For all these examples, the ISMO algorithm was shown to significantly outperform the DNNopt algorithm, both in terms of the decay of the (mean) objective function and its greatly reduced sensitivity to starting values. Moreover, the ISMO algorithm outperformed (by more than an order of magnitude) a standard black-box optimization algorithm for aerodynamic shape optimization.

Thus, on the basis of the proposed theory and numerical examples, we can assert that the active learning ISMO algorithm provides a very promising framework for significantly reducing the computational cost of PDE constrained optimization problems, while being robust and accurate. Moreover, it is very flexible, with respect to the choice of the underlying optimization algorithm as well as architecture of neural networks, and is straightforward to implement in different settings.

This article is the first to present the ISMO algorithm and readily lends itself to the following extensions.
\begin{itemize}
    \item We have presented the ISMO algorithm in a very general setting of an abstract PDE constrained optimization problem. Although we have selected three representative examples in this article, the algorithm itself can be applied in a straightforward manner to a variety of PDEs, both linear and non-linear. 
    \item We have focused on a particular type of underlying optimization algorithm, i.e.,  of the quasi-Newton type. However, the ISMO algorithm \ref{alg:ismo} does not rely on any specific details of the optimization algorithm. Hence, any optimization algorithm can be used as the teacher or oracle within ISMO. In particular, one can also use gradient-free optimization algorithms such as particle swarm optimization and genetic algorithms within ISMO. Thus, even black-box optimization algorithms can employed within ISMO to significantly accelerate finding solutions of PDE constrained optimization problems. 
    \item Similarly, the ISMO algorithm does not rely on a specific structure of the surrogate model. Although we concentrated on neural networks in this article, one can readily use other surrogates such as Gaussian process regression, within the ISMO algorithm.
    \item In this article, we have considered the PDE constrained optimization problem, where the objective function in \eqref{eq:cost1}, is defined in terms of an \emph{observable} \eqref{eq:ptoob} of the PDE solution. In some problems, particularly for control problems, it might be necessary to consider the whole solution field and approximate it with a neural network. Such DNNs are also available but might be more expensive to train and evaluate. Thus, one needs to carefully investigate the resulting speedup with the ISMO algorithm over standard optimization algorithms in this context.
    \item A crucial issue in PDE constrained optimization is that of optimization under uncertainty, see \cite{SS1} and references therein. In this context, the ISMO algorithm demonstrates considerable potential to significantly outperform state of the art algorithms and we consider this extension in a forthcoming paper.

\end{itemize}
\section*{Acknowledgements}
The research of SM is partially supported by ERC CoG 770880 SPARCCLE. The research of PC is supported by Department of Atomic Energy, Government of India, under  project no. 12-R\&D-TFR-5.01-0520.

\bibliographystyle{abbrv}
\bibliography{ISMOpaper_ref}

\begin{thebibliography}{10}

\bibitem{tensorflow2015-whitepaper}
M.~Abadi, A.~Agarwal, P.~Barham, E.~Brevdo, Z.~Chen, C.~Citro, G.~S. Corrado,
  A.~Davis, J.~Dean, M.~Devin, S.~Ghemawat, I.~Goodfellow, A.~Harp, G.~Irving,
  M.~Isard, Y.~Jia, R.~Jozefowicz, L.~Kaiser, M.~Kudlur, J.~Levenberg,
  D.~Man\'{e}, R.~Monga, S.~Moore, D.~Murray, C.~Olah, M.~Schuster, J.~Shlens,
  B.~Steiner, I.~Sutskever, K.~Talwar, P.~Tucker, V.~Vanhoucke, V.~Vasudevan,
  F.~Vi\'{e}gas, O.~Vinyals, P.~Warden, M.~Wattenberg, M.~Wicke, Y.~Yu, and
  X.~Zheng.
\newblock {TensorFlow}: Large-scale machine learning on heterogeneous systems,
  2015.
\newblock Software available from tensorflow.org.

\bibitem{Jent1}
C.~Beck, S.~Becker, P.~Grohs, N.~Jaafari, and A.~Jentzen.
\newblock Solving stochastic differential equations and kolmogorov equations by
  means of deep learning.
\newblock Preprint, available as arXiv:1806.00421v1.

\bibitem{BV1}
R.~Becker, D.~Meidner, and B.~Vexler.
\newblock Efficient numerical solution of parabolic optimization problems by
  finite element methods.
\newblock {\em Optimization methods and sofware}, 2007.

\bibitem{BS1}
A.~Borzi and V.~Schultz.
\newblock {\em Computational optimization of systems governed by partial
  differential equations}.
\newblock SIAM, 2012.

\bibitem{CAF1}
R.~E. Caflisch.
\newblock Monte carlo and quasi-monte carlo methods.
\newblock {\em Acta Numerica}, 7:1--49, 1998.

\bibitem{CALD}
J.~Calder.
\newblock Consistency of lipschitz learning with infinite unlabeled data and
  finite labeled data,.
\newblock {\em SIAM Journal on Mathematics of Data Science}, 1:780--812, 2019.

\bibitem{chollet2015keras}
F.~Chollet et~al.
\newblock Keras.
\newblock \url{https://keras.io}, 2015.

\bibitem{PSO}
M.~Clerc.
\newblock {\em Particle swarm optimization}.
\newblock Wiley, 2005.

\bibitem{CS1}
F.~Cucker and S.~Smale.
\newblock On the mathematical foundations of learning.
\newblock {\em Bulletin of the American Mathematical Society}, 39(1):1--49,
  2002.

\bibitem{Duvigneau2011}
R.~Duvigneau and P.~Chandrashekar.
\newblock Kriging-based optimization applied to flow control.
\newblock {\em International Journal for Numerical Methods in Fluids},
  69(11):1701--1714, Aug. 2011.

\bibitem{HEJ1}
W.~E, J.~Han, and A.~Jentzen.
\newblock Deep learning-based numerical methods for high-dimensional parabolic
  partial differential equations and backward stochastic differential
  equations.
\newblock {\em Communications in Mathematics and Statistics}, 5(4):349--380,
  2017.

\bibitem{Dfold}
R.~Evans, J.~Jumper, J.~Kirkpatrick, L.~Sifre, T.~Green, C.~Qin, A.~Zidek,
  A.~Nelson, A.~Bridgland, H.~Penedones, et~al.
\newblock De novo structure prediction with deep-learning based scoring.
\newblock {\em Annual Review of Biochemistry}, 77(6):363--382, 2018.

\bibitem{lbfgs}
R.~Fletcher.
\newblock {\em Practical methods of optimization}.
\newblock John Wiley and sons, 1987.

\bibitem{Forrester2008}
A.~I.~J. Forrester, A.~Sóbester, and A.~J. Keane.
\newblock {\em Engineering {Design} via {Surrogate} {Modelling}: {A}
  {Practical} {Guide}}.
\newblock Wiley, 2008.

\bibitem{DLbook}
I.~Goodfellow, Y.~Bengio, and A.~Courville.
\newblock {\em Deep learning}.
\newblock MIT press, 2016.

\bibitem{E1}
J.~Han, A.~Jentzen, and W.~E.
\newblock Solving high-dimensional partial differential equations using deep
  learning.
\newblock {\em Proceedings of the National Academy of Sciences},
  115(34):8505--8510, 2018.

\bibitem{UMRIDA}
C.~Hirsch, D.~Wunsch, J.~Szumbarski, J.~Pons-Prats, et~al.
\newblock Uncertainty management for robust industrial design in aeronautics.
\newblock Notes on Numerical Fluid Mechanics and Multidisciplinary Design.
  Springer, 2019.

\bibitem{IBM-Spectrum-LSF}
IBM.
\newblock Ibm spectrum ldf, 2020 (accessed July 27, 2020).

\bibitem{adam}
D.~P. Kingma and J.~Ba.
\newblock Adam: {A} method for stochastic optimization.
\newblock In {\em 3rd International Conference on Learning Representations,
  {ICLR} 2015}, 2015.

\bibitem{Kumar2007}
K.~Kumar and M.~T. Nair.
\newblock A mesh deformation strategy for multiblock structured grids.
\newblock Project {Document} CF 0708, National Aerospace Laboratories, 2007.

\bibitem{Lag1}
I.~E. Lagaris, A.~Likas, and D.~I. Fotiadis.
\newblock Artificial neural networks for solving ordinary and partial
  differential equations.
\newblock {\em IEEE Transactions on Neural Networks}, 9(5):987--1000, 1998.

\bibitem{DLnat}
Y.~LeCun, Y.~Bengio, and G.~Hinton.
\newblock Deep learning.
\newblock {\em Nature}, 521(7553):436--444, 2015.

\bibitem{LMM1}
K.~O. Lye, S.~Mishra, and R.~Molinaro.
\newblock A multi-level procedure for enhancing accuracy of machine learning
  algorithms.
\newblock {\em European Journal of Applied Mathematics}, 2020.

\bibitem{LMR1}
K.~O. Lye, S.~Mishra, and D.~Ray.
\newblock Deep learning observables in computational fluid dynamics.
\newblock {\em Journal of Computational Physics}, page 109339, 2020.

\bibitem{Lyu2015}
Z.~Lyu, G.~K.~W. Kenway, and J.~R. R.~A. Martins.
\newblock Aerodynamic shape optimization investigations of the common research
  model wing benchmark.
\newblock {\em AIAA Journal}, 53(4):968--985, 2015.

\bibitem{HH1}
D.~A. Masters, N.~J. Taylor, T.~Rendall, C.~B. Allen, and D.~J. Poole.
\newblock Geometric comparison of aerofoil shape parameterization methods.
\newblock {\em AIAA Journal}, pages 1575--1589, 2017.

\bibitem{SM1}
S.~Mishra.
\newblock A machine learning framework for data driven acceleration of
  computations of differential equations.
\newblock {\em Mathematics in Engineering}, 1:118, 2019.

\bibitem{MM1}
S.~Mishra and R.~Molinaro.
\newblock Estimates on the generalization error of physics informed neural
  networks (pinns) for approximating pdes.
\newblock Preprint, available from arXiv:2006:16144v1, 2020.

\bibitem{MM2}
S.~Mishra and R.~Molinaro.
\newblock Estimates on the generalization error of physics informed neural
  networks (pinns) for approximating pdes ii: A class of inverse problems.
\newblock Preprint, available from ETH Z\"urich SAM Reports, 2020.

\bibitem{MR1}
S.~Mishra and T.~K. Rusch.
\newblock Enhancing accuracy of deep learning algorithms by training with
  low-discrepancy sequences.
\newblock Preprint, available as arXiv:2005.12564, 2020.

\bibitem{Mohammadi2009}
B.~Mohammadi and O.~Pironneau.
\newblock {\em Applied {Shape} {Optimization} for {Fluids}}.
\newblock Oxford University Press, 2009.

\bibitem{Morrison1992}
J.~H. Morrison.
\newblock A compressible {Navier}-{Stokes} solver with two-equation and
  {Reynolds} stress turbulence closure models.
\newblock {NASA} {Contractor} {Report} 4440, NASA, 1992.

\bibitem{owen}
A.~B. Owen.
\newblock Multidimensional variation for quasi-monte carlo.
\newblock In {\em Contemporary Multivariate Analysis And Design Of Experiments:
  In Celebration of Professor Kai-Tai Fang's 65th Birthday}, pages 49--74.
  World Scientific, 2005.

\bibitem{ROMbook}
A.~Quarteroni, A.~Manzoni, and F.~Negri.
\newblock {\em Reduced basis methods for partial differential equations: an
  introduction}, volume~92.
\newblock Springer, 2015.

\bibitem{KAR1}
M.~Raissi and G.~E. Karniadakis.
\newblock Hidden physics models: Machine learning of nonlinear partial
  differential equations.
\newblock {\em Journal of Computational Physics}, 357:125--141, 2018.

\bibitem{KAR2}
M.~Raissi, P.~Perdikaris, and G.~E. Karniadakis.
\newblock Physics-informed neural networks: A deep learning framework for
  solving forward and inverse problems involving nonlinear partial differential
  equations.
\newblock {\em Journal of Computational Physics}, 378:686--707, 2019.

\bibitem{KAR4}
M.~Raissi, A.~Yazdani, and G.~E. Karniadakis.
\newblock Hidden fluid mechanics: A navier-stokes informed deep learning
  framework for assimilating flow visualization data.
\newblock {\em arXiv preprint arXiv:1808.04327}, 2018.

\bibitem{GPRbook}
C.~E. Rasmussen.
\newblock Gaussian processes in machine learning.
\newblock In {\em Summer School on Machine Learning}, pages 63--71. Springer,
  2003.

\bibitem{DR1}
D.~Ray and J.~S. Hesthaven.
\newblock An artificial neural network as a troubled-cell indicator.
\newblock {\em Journal of Computational Physics}, 367:166--191, 2018.

\bibitem{Reuther1995}
J.~Reuther and A.~Jameson.
\newblock Aerodynamic shape optimization of wing and wing-body configurations
  using control theory.
\newblock In {\em 33rd Aerospace Sciences Meeting and Exhibit}. American
  Institute of Aeronautics and Astronautics, Jan. 1995.

\bibitem{Samareh2001}
J.~A. Samareh.
\newblock Survey of shape parameterization techniques for high-fidelity
  multidisciplinary shape optimization.
\newblock {\em {AIAA} Journal}, 39(5):877--884, May 2001.

\bibitem{SS1}
C.~Schillings, S.~Schmidt, and V.~Schulz.
\newblock Efficient shape optimization for certain and uncertain aerodynamic
  design.
\newblock {\em Computers and Fluids}, 46:78--87, 2011.

\bibitem{AL}
B.~Settles.
\newblock {\em Active learning: synthesis lectures on artificial intelligence
  and machine learning}.
\newblock Morgan and Claypool, 2012.

\bibitem{GA}
S.~N. Sivanandam and S.~N. Deepa.
\newblock {\em Introduction to genetic algorithms}.
\newblock Springer, 2008.

\bibitem{INC}
J.~Tompson, K.~Schlachter, P.~Sprechmann, and K.~Perlin.
\newblock Accelerating eulerian fluid simulation with convolutional networks.
\newblock In {\em Proceedings of the 34th International Conference on Machine
  Learning, {ICML}}, volume~70, pages 3424--3433, 2017.

\bibitem{TRL1}
F.~Troltzsch.
\newblock {\em Optimal control of partial differential equations}.
\newblock AMS, 2010.

\bibitem{2020SciPy-NMeth}
P.~{Virtanen}, R.~{Gommers}, T.~E. {Oliphant}, M.~{Haberland}, T.~{Reddy},
  D.~{Cournapeau}, E.~{Burovski}, P.~{Peterson}, W.~{Weckesser}, J.~{Bright},
  S.~J. {van der Walt}, M.~{Brett}, J.~{Wilson}, K.~{Jarrod Millman},
  N.~{Mayorov}, A.~R.~J. {Nelson}, E.~{Jones}, R.~{Kern}, E.~{Larson},
  C.~{Carey}, {\.I}.~{Polat}, Y.~{Feng}, E.~W. {Moore}, J.~{Vand erPlas},
  D.~{Laxalde}, J.~{Perktold}, R.~{Cimrman}, I.~{Henriksen}, E.~A. {Quintero},
  C.~R. {Harris}, A.~M. {Archibald}, A.~H. {Ribeiro}, F.~{Pedregosa}, P.~{van
  Mulbregt}, and S.~.~. {Contributors}.
\newblock {SciPy 1.0: Fundamental Algorithms for Scientific Computing in
  Python}.
\newblock {\em Nature Methods}, 17:261--272, 2020.

\end{thebibliography}

\end{document}